\documentclass[12pt]{amsart}

\usepackage{amssymb}
\usepackage{txfonts}

\usepackage{enumerate,color}
\usepackage[pagebackref,colorlinks,linkcolor=red,citecolor=blue,urlcolor=blue,hypertexnames=true]{hyperref}
\usepackage{url}

\usepackage{upgreek}

\renewcommand{\subset}{\subseteq}
\setcounter{tocdepth}{3}

\linespread{1.176}

\textwidth = 6.5 in 
\textheight = 8.5 in 
\oddsidemargin = 0.0 in 
\evensidemargin = 0.0 in
\topmargin = 0.0 in
\headheight = 0.0 in
\headsep = 0.3 in
\parskip = 0.05 in
\parindent = 0.3 in


\newtheorem{theorem}            {Theorem}[section]
\newtheorem{corollary}          [theorem]{Corollary}
\newtheorem{proposition}        [theorem]{Proposition}

\newtheorem{lemma}              [theorem]{Lemma}



\def\cA{ {\mathcal A} }

\def\cK{ {\mathcal K} }

\def\aA{A(\mathbb A)}


\newcommand{\df}[1]{{\bf{#1}}{\index{#1}}}


\newcommand{\hVec}[2]{\begin{pmatrix} #1 & #2\end{pmatrix}}
\newcommand{\vVec}[2]{\begin{pmatrix} #1 \\ #2\end{pmatrix}}

\newcommand{\norm}[1]{\left\lvert#1\right\rvert}
\newcommand{\Norm}[1]{\lVert#1\rVert}
\newcommand{\innerP}[2]{\left\langle #1,#2\right\rangle}

\newcommand{\MBBA}{\mathbb{A}}
\newcommand{\MBBC}{\mathbb{C}}
\newcommand{\MBBD}{\mathbb{D}}
\newcommand{\MBBN}{\mathbb{N}}
\newcommand{\MBBR}{\mathbb{R}}
\newcommand{\MBBT}{\mathbb{T}}
\newcommand{\MBBZ}{\mathbb{Z}}
\newcommand{\MCA}{\mathcal{A}}
\newcommand{\MCE}{\mathcal{E}}
\newcommand{\MCF}{\mathcal{F}}
\newcommand{\MCL}{\mathcal{L}}
\newcommand{\MCM}{\mathcal{M}}
\newcommand{\MCN}{\mathcal{N}}
\newcommand{\MCV}{\mathcal{V}}
\newcommand{\MCW}{\mathcal{W}}
\newcommand{\HINF}{\text{H}^{\infty}}
\newcommand{\LINF}{\text{L}^{\infty}}
\newcommand{\HT}{\text{H}^2}
\newcommand{\LT}{\text{L}^2}

\newcommand{\AD}{\MBBA(\MBBD)}
\newcommand{\PCT}{\mathbb{P}^1(\MBBC)}

\newcommand{\dt}{\;\text{d}t}
\newcommand{\dmu}{\;\text{d}\mu}
\newcommand{\dnu}{\;\text{d}\nu}

\newcommand{\dndm}{\frac{\dnu}{\dmu}}

\DeclareMathOperator{\Span}{span}
\DeclareMathOperator{\Sgn}{sgn}
\DeclareMathOperator{\essinf}{ess\ inf}
\DeclareMathOperator{\esssup}{ess\ sup}
\DeclareMathOperator{\Index}{index}




\makeindex

\begin{document}

\title[Eigenvalues for Toeplitz operators]{Eigenvalues of Toeplitz operators on the Annulus and Neil Algebra}

\author[Broschinski]{Adam Broschinski${}^*$}
\address{Adam Broschinski, Department of Mathematics\\
  University of Florida, Gainesville 
   }
   \email{aebroschinski@ufl.edu}
\thanks{${}^*$ I thank my advisor, Scott McCullough, for his advice and patience.}

\subjclass[2010]{47Axx (Primary).
47B35  (Secondary)}

\date{\today}
\keywords{Toeplitz operator, annulus, Neil parabola, bundle shift}

\begin{abstract}
  By working with all collection of all the Sarason Hilbert Hardy
  spaces for the annulus algebra  an improvement to the results of
  Aryana and Clancey on eigenvalues of self-adjoint Toeplitz
  operators on an annulus is obtained.  The ideas are applied
  to Toeplitz operators on the Neil algebra.  These examples
  may provide a template for a general theory of Toeplitz operators
  with respect to an algebra.
\end{abstract}

\maketitle

\section{Introduction}
  In this article, eigenvalues for 
  self-adjoint Toeplitz operators with real symbols associated to the Neil algebra and the algebra of bounded analytic functions on an annulus are investigated.

  The Neil algebra $\cA$ is the subalgebra of $\HINF$ consisting of those $f$ whose derivative at $0$ is $0$.  Pick interpolation in this, and other related more elaborate subalgebras of $\HINF$, is a current active area of research with \cite{DP}, \cite{DPRS}, \cite{BBtH}, \cite{BH1}\cite{BH2} \cite{JKM}, and \cite{K} among the references.

  For the algebra $\aA$ of functions analytic on the annulus $\MBBA$ and continuous on the closure of $\MBBA$ the results obtained here give finer detail than those of Aryana and Clancey \cite{Ar1}, \cite{AC} (see also \cite{Ar2} and \cite{C}) in their generalization of a result of Abrahamse \cite{Ab}. The proofs are accessible to readers familiar with basic functional analysis and function theory on the annulus as found in either \cite{Fisher} or \cite{Sarason}; in particular, they make no use of theta functions.

  The approach used and structure exposed here applies to many other algebras, including $\HINF(R)$ for a (nice) multiply connected domain in $\MBBC$ and finite codimension subalgebras of $\HINF$, though the details would necessarily be more complicated and less concrete than for the two algebras mentioned above.

  The article proper is organized as follows.
  The algebras $\aA$ and $\MCA$ are treated in Sections \ref{sec:Annulus} and \ref{sec:Neil} respectively.
  These sections can be read independently.
  Only the standard theory of $H^2$ is needed for Section \ref{sec:Neil}.
  The article concludes with Section \ref{sec:Boundaries}; it provides an additional rationale for considering families of representations when studying Toeplitz operators associated to the algebras $\cA$ and $\aA$.

I thank the reviewer for their suggestions which greatly this improved manuscript.
\section{Toeplitz operators on the annulus}
\label{sec:Annulus}
 Fix $0<q<1$ and let $\MBBA$ denote the annulus,
\[
 \MBBA = \MBBA_q = \{z\in\MBBC: q<|z|<1 \}.
\]
  The boundary $B$ of $\MBBA$ has two components
\[
  B_q = \{z\in\MBBC: \norm{z}=q\}
\]
and
\[
  B_1 =\{z\in\MBBC:\norm{z}=1\}.
\]
It is well known that for $\MBBA$ the analog of the classical Hilbert Hardy space  $\HT$ on the disc is a one parameter family of Hilbert spaces that can be described in several different ways \cite{AD}, \cite{Sarason}, or \cite{Ab}.
For our purposes the following is convenient.
Following \cite{Sarason} we will use the universal covering space of the annulus, $\widehat{\MBBA}=\{(r,t)\in\MBBR^2: q<r<1\text{ and }-\infty<t<\infty\}$ with locally conformal coordinates given by the map $\phi(r,t)\mapsto re^{it}$, to define modulus automorphic functions.
A \df{Modulus Automorphic} function, $F$, on $\widehat\MBBA$ is a meromorphic function on $\widehat\MBBA$ that satisfies
\[
\norm{F(r,t)}=\norm{F(r,t+2n\pi)}\text{ for all }q<r<1,\ 0\leq t<2\pi\text{ and }n\in\MBBZ.
\]
So, although $f\coloneqq F\circ\phi^{-1}$ may be multivalued on $\MBBA,$ the function  $\norm{f}$ is single valued.
Because an analytic function is determined, up to a unimodular constant, by it modulus,
if $F$ is modulus automorphic, then
there exists a unimodular constant, $\lambda_{F}$, such that $F(r,t+2\pi)\equiv\lambda_{F} F(r,t)$.
The \df{index} of $F$, denoted by $\Index(F)$, is the unique $\alpha\in[0,1)$ such that $\alpha=(2\pi i)^{-1}\log\lambda_F$.
Let $\mu_j$ denote the multiple of arclength measure on $B_j$ weighted so that $\mu_j(B_j)=2\pi$ and let $\mu=\mu_1+\mu_q$. Given $\alpha\in[0,1),$ define an analog of $\HT(\MBBD)$ in the following way
\[
  \HT_{\alpha}(\MBBA)\coloneqq\{F\circ\phi^{-1}: \Index(F)=\alpha \text{ and } \int_{B}\norm{F\circ\phi^{-1}}^2 \dmu<\infty\}.
\]

In \cite[Section 7]{Sarason} Sarason established the following important properties of $\HT_{\alpha}(\MBBA)$:
\[
	\HT_{\alpha}(\MBBA)\subset\LT(\MBBA) \text{ for all } \alpha\in[0,1)
\]
 and, letting $\chi(z)=z,$
\[
	\HT_{\alpha}(\MBBA)=\{\chi^{\alpha}f\ :\ f\in\HT_0(\MBBA)\}.
\label{zaHT}
\]
Moreover, Sarason showed that the Laurent polynomials are dense in $\HT_0(\MBBA)$ and
thus $\HT_0(\MBBA)$ admits an analog to Fourier Analysis on the disk.

Turning to multiplication and Toeplitz operators on the $\HT_\alpha$ spaces, let $C(\overline{\MBBA})$ denote the Banach algebra (in the uniform  norm) of continuous functions on the closure of $\MBBA$.
The annulus algebra, $\aA,$ is the (Banach) subalgebra of $C(\overline{\MBBA})$ consisting of those $f$ which are analytic in $\MBBA$.
It is easy to see that each $\HT_{\alpha}$ space in invariant for $\aA$ in the sense
  that each  $a\in\aA$ determines a bounded linear operator $M^{\alpha}_a$ on $\HT_{\alpha}$ defined by
\[
M^{\alpha}_a f=af.
\]
Moreover, the mapping $\pi_{\alpha}:\aA\to B(\HT_{\alpha})$ defined by $\pi_{\alpha}(a)=M_a^{\alpha}$ is
 a unital representation of the algebra $\aA$ into the the space $B(\HT_\alpha)$
  of bounded linear operators on the Hilbert space $\HT_\alpha$.

Next let $\phi\in\LINF$ denote a real-valued function on $B$.
The symbol $\phi$ determines a family, one for each $\alpha$, of Toeplitz operators.
Specifically, let $T_{\phi}^{\alpha}$ denote the \df{Toeplitz} operator on $\HT_{\alpha}$ defined by
\[
\HT_{\alpha}\ni f\mapsto P_{\alpha}\phi f,
\]
where $P_{\alpha}$ is the projection of $\LT(B)$ onto $\HT_{\alpha}$.
 A function $g\in\HT_{\alpha}$ is \df{outer} if
 $\{ag\in\HT_{\alpha}:a\in\aA\}$ is dense in
 the Hilbert space $\HT_{\alpha}$ (see \cite[Theorem 14]{Sarason}).

The following is the main result on the existence of eigenvalues for Toeplitz operators on $\MBBA$.
\begin{theorem}
\label{thm:mainAnnulus}
  Fix a real-valued $\phi\in \LINF$. Let $\alpha\in[0,1)$  and a nonzero
  $g\in \HT_{\alpha}$ be given.
  If $T^{\alpha}_{\phi}g=0$, then $g$ is outer and moreover there exists a nonzero $c\in\MBBR$ such that
\begin{equation}
\label{eq:AnnulusMagic}
\phi\norm{g}^2=c\log\norm{\chi q^{-1/2}}.
\end{equation}

If there is an $\alpha$ and an outer function $g\in\HT_\alpha$ such that
  Equation \eqref{eq:AnnulusMagic} holds, then $T^{\alpha}_{\phi}g=0$, where $\alpha$ is necessarily the index of $g$.
 Thus $\alpha$ is congruent modulo 1 to,
\begin{equation}
\label{eq:AnnulusSpace}
  \frac{1}{4\pi\log q}\left(\int_{B_1}\log\norm{\phi}\dmu_1-\int_{B_q}\log\norm{\phi}\dmu_q\right).
\end{equation}
 In particular, there exists at most one $\alpha$ such that $T^{\alpha}_{\phi}$ has eigenvalue $0$ and the dimension of this eigenspace is at most one.
\end{theorem}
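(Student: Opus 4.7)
The plan is to first translate the eigenvalue equation $T^{\alpha}_\phi g = 0$ into $\phi g \perp \HT_\alpha$ in $L^2(B,d\mu)$, and then to identify the very restricted shape of real functions on $B$ that annihilate the cyclic subspace $\aA g$. Pairing $\phi g$ against $ag\in\HT_\alpha$ for $a\in\aA$ gives
\[
  \int_B \bar a\, \phi|g|^2\, d\mu = 0 \qquad \text{for every } a\in\aA,
\]
and because $\phi|g|^2$ is real the identity with $a$ in place of $\bar a$ also holds. Using density of the Laurent polynomials in $\aA$, testing against $a(z)=z^n$ and writing $u_j:=\phi|g|^2|_{B_j}$ yields, for every $n\in\MBBZ$, the relation $\hat u_1(n)+q^n\hat u_q(n)=0$, and, applying reality together with the change $n\mapsto -n$, also $\hat u_1(n)+q^{-n}\hat u_q(n)=0$. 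Since $q^n\neq q^{-n}$ for $n\neq 0$, the only surviving Fourier data are $\hat u_1(0)=-\hat u_q(0)$, so $u_1$ and $u_q$ are constants that are negatives of one another. Matching this against the step function $\log|\chi q^{-1/2}|$ (which takes the values $-\frac{1}{2}\log q$ on $B_1$ and $\frac{1}{2}\log q$ on $B_q$) gives $\phi|g|^2=c\log|\chi q^{-1/2}|$ with $c\in\MBBR$; and $c\neq 0$ whenever $\phi\not\equiv 0$, since $g$ is nonvanishing a.e.\ on $B$.

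For the outerness of $g$, I would suppose its Sarason-type factorization is $g=vh$ with $v\in\HT_\gamma$ inner (so $|v|=1$ on $B$) and $h\in\HT_\beta$ outer, where $\gamma+\beta\equiv\alpha\pmod 1$. Multiplication by $v$ is an isometry from $\HT_\beta$ into $\HT_\alpha$, so for $k\in\HT_\beta$,
\[
  \langle\phi h, k\rangle = \int_B \phi g\,\overline{vk}\,d\mu = \langle\phi g, vk\rangle = 0,
\]
and hence $T^\beta_\phi h=0$. The content of $T^\alpha_\phi g=0$ that remains to be used is the orthogonality of $\phi g$ against the coinvariant subspace $\HT_\alpha\ominus v\HT_\beta$; choosing a suitable test vector from this subspace should force $v$ to reduce to a unimodular constant, i.e.\ $g$ to be outer. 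Pinning this step down with only the function theory already developed in this section, rather than bundle-shift or theta-function machinery, is the main obstacle.

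The converse direction is then immediate. Given outer $g\in\HT_\alpha$ with $\phi|g|^2=c\log|\chi q^{-1/2}|$, the same Fourier calculation gives $\int_B\bar a\,\phi|g|^2\,d\mu=0$ for every $a\in\aA$, so $\phi g\perp\aA g$; outerness then promotes this to $\phi g\perp\HT_\alpha$, which is $T^\alpha_\phi g=0$.

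Lastly, taking moduli in the equation shows $\log|\phi|+2\log|g|$ equals the same constant on both $B_1$ and $B_q$. Subtracting the integrals of $\log|\phi|$ over the two components and applying Sarason's index identity $\int_{B_1}\log|g|\,d\mu_1-\int_{B_q}\log|g|\,d\mu_q=-2\pi\alpha\log q$ for outer $g\in\HT_\alpha$ delivers formula \eqref{eq:AnnulusSpace}, which determines $\alpha$ uniquely in $[0,1)$. Finally, two null vectors $g_1,g_2$ give $|g_1|^2/|g_2|^2$ constant, and any two outer functions in the same $\HT_\alpha$ with proportional moduli are scalar multiples of one another, so the eigenspace has dimension at most one.
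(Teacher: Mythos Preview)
Your treatment of the annihilator identity $\phi|g|^2=c\log|\chi q^{-1/2}|$, of the converse, of the index formula, and of uniqueness is correct and essentially the same as the paper's.

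The genuine gap is exactly where you flag it: the outerness of $g$. Your reduction to $T^\beta_\phi h=0$ for the outer part $h$ is valid but does not help, since it only uses $\phi g\perp v\HT_\beta$ and discards the orthogonality against $\HT_\alpha\ominus v\HT_\beta$. The paper fills this gap by a direct and elementary choice of test vectors that you did not find. Write the Sarason factorization as $g=\psi F$ with $\psi\in\HT_\beta$ inner and $F\in\HT_{\alpha-\beta}$ outer. Then $\chi^\beta F\in\HT_\alpha$, and hence $\chi^n\chi^\beta F\in\HT_\alpha$ for every $n\in\MBBZ$ (negative powers of $\chi$ are analytic on the annulus). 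Pairing $\phi g$ against these vectors and using the already established identity $\phi|g|^2=c\log|\chi q^{-1/2}|$ gives
\[
0=\int_B c\log\bigl|\chi q^{-1/2}\bigr|\,|\chi|^{2\beta}\,(\chi^{-\beta}\psi)\,\bar\chi^n\,d\mu .
\]
Set $C:=\chi^{-\beta}\psi\in\HT_0$ and use Sarason's relation $\widehat C_1(n)=q^{-n}\widehat C_q(n)$ between the boundary Fourier coefficients; the integral becomes a nonzero multiple of $\widehat C_q(n)\bigl(q^{\,n+2\beta}-q^{-n}\bigr)$. This forces $\widehat C_q(n)=0$ for all $n\neq 0$ and $\beta=0$, so $\psi$ is a unimodular constant and $g$ is outer. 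No bundle-shift or theta-function machinery is needed; the point is simply that $\chi^\beta F$, not just $g$ itself, lies in $\HT_\alpha$, giving you access to the test vectors you were missing.
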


 Before we prove Theorem \ref{thm:mainAnnulus}, we pause to collect two corollaries.
 We say that $\lambda$ is an \df{eigenvalue of $\phi$ relative to $\aA$} if there exists an
  $\alpha\in[0,1)$ and nontrivial solution $g$ to $T^{\alpha}_{\phi}g=\lambda g$.
\begin{corollary}
\label{cor:mainAnnulus}
If
\[
\esssup\{\phi(z): z\in B_q\}=m<0<M=\essinf\{\phi(z): z\in B_1\}
\]
or
\[
\esssup\{\phi(z): z\in B_1\}=m<0<M=\essinf\{\phi(z): z\in B_q\},
\]
then each $\lambda\in(m,M)$ is an eigenvalue of $\phi$ relative to $\aA$, the
latter case only happening when the $c$ from Theorem \ref{thm:mainAnnulus} is negative.
Further, $M$ (resp. $m$) is an eigenvalue if and only if
$\frac{\log\norm{\chi q^{-1/2}}}{\phi-M}\in\text{L}^1$ (resp. $\frac{\log\norm{\chi q^{-1/2}})}{\phi-m}\in\text{L}^1$).
\end{corollary}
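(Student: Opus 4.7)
The plan is to apply Theorem \ref{thm:mainAnnulus} with $\phi$ replaced by $\phi - \lambda$. By that theorem, $\lambda$ is an eigenvalue of $\phi$ relative to $\aA$ exactly when, for some $\alpha \in [0,1)$ and some nonzero $c \in \MBBR$, there is an outer $g \in \HT_\alpha$ satisfying $|g|^2 = c\log|\chi q^{-1/2}|/(\phi - \lambda)$ a.e.\ on $B$. Thus each assertion of the corollary reduces to constructing (or obstructing) such $g$.

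Given $\lambda \in (m, M)$, I would first observe that $\log|\chi q^{-1/2}|$ is the negative constant $\tfrac{1}{2}\log q$ on $B_q$ and the positive constant $-\tfrac{1}{2}\log q$ on $B_1$. Under the first hypothesis of the corollary, $\phi - \lambda$ is essentially strictly negative on $B_q$ and essentially strictly positive on $B_1$, so with the choice $c > 0$ the candidate density $h := c\log|\chi q^{-1/2}|/(\phi - \lambda)$ is bounded above and bounded below by positive constants on all of $B$. Under the second hypothesis the signs of $\phi - \lambda$ are reversed on the two components, forcing $c < 0$; this is the sign statement in the corollary.

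Next I would invoke the standard Sarason construction of outer functions on $\MBBA$ (from \cite{Sarason}): any positive measurable $h$ on $B$ with $\log h \in L^1(\mu)$ is the squared boundary modulus of some outer function $g$ lying in a unique $\HT_\alpha$. Since in our setting $\log h$ is essentially bounded, this construction applies and produces the required $g$. Applying the second half of Theorem \ref{thm:mainAnnulus} to $\phi - \lambda$ then yields $T^\alpha_\phi g = \lambda g$, and the index formula \eqref{eq:AnnulusSpace} (read with $\phi - \lambda$ in place of $\phi$) identifies the unique $\alpha$ in which $g$ lives.

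The endpoint assertion at $\lambda = M$ is the same construction with $h = c\log|\chi q^{-1/2}|/(\phi - M)$: this $h$ may be unbounded, but the hypothesis $\log|\chi q^{-1/2}|/(\phi - M) \in L^1$ is precisely $h \in L^1(\mu)$, i.e.\ $g \in \LT(B)$, while integrability of $\log h$ follows from the elementary estimate $|\log f| \leq f + 1/f$ for $f > 0$ applied to $f = \phi - M$ (bounded above by essential boundedness of $\phi$, and with $1/(\phi - M) \in L^1(\mu_1)$ by hypothesis). Conversely, if $M$ is an eigenvalue then Theorem \ref{thm:mainAnnulus} forces $|g|^2 = c\log|\chi q^{-1/2}|/(\phi - M)$, so $|g|^2 \in L^1(\mu)$ yields the stated condition; the analogous argument handles $\lambda = m$. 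The main technical obstacle is the existence step for the outer function with prescribed modulus in the precisely correct $\HT_\alpha$; once that annulus fact from Sarason is granted, the rest reduces to a direct appeal to Theorem \ref{thm:mainAnnulus} together with routine inequalities.
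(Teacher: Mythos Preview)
Your proposal is correct and follows essentially the same route as the paper: define the candidate density $\psi=\dfrac{\log|\chi q^{-1/2}|}{\phi-\lambda}$, check it is positive and bounded above and below for $m<\lambda<M$, invoke Sarason's outer-function theorem to produce $g$ with $|g|^2=\psi$ (or $-\psi$ in the second case), and then apply Theorem~\ref{thm:mainAnnulus}; the endpoint argument is likewise the same integrability check. One small cleanup: at $\lambda=M$ your estimate $|\log f|\le f+1/f$ should be applied to $f=|\phi-M|$, not $\phi-M$, since $\phi-M<0$ on $B_q$; alternatively, the paper's observation that $\psi$ remains bounded below away from zero at the endpoints makes $\log\psi\in L^1$ immediate once $\psi\in L^1$.
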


\begin{corollary}
\label{cor:intAnnulus}
The set of eigenvalues of $\phi$ relative to $\aA$ is either empty, a point, or an interval.
\end{corollary}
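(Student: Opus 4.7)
The plan is to reduce the statement to the following: if $\lambda_1<\lambda_2$ both lie in the eigenvalue set $E$ of $\phi$ relative to $\aA$, then every $\lambda\in(\lambda_1,\lambda_2)$ lies in $E$ as well. Combined with the trivial cases $|E|\le 1$, this forces $E$ to be an interval (possibly open, closed, or half-open).

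First I would apply Theorem \ref{thm:mainAnnulus} to the symbol $\phi-\lambda_i$ for $i=1,2$, whose Toeplitz operator on the relevant $\HT_{\alpha_i}$ is $T^{\alpha_i}_\phi-\lambda_i I$ and has a nontrivial kernel by hypothesis. This produces an outer $g_i\in\HT_{\alpha_i}$ and a nonzero $c_i\in\MBBR$ with
\[
(\phi-\lambda_i)\,|g_i|^2 \;=\; c_i\,L,\qquad L:=\log|\chi q^{-1/2}|.
\]
The key observation is that $L=\log q^{-1/2}>0$ on $B_1$ and $L=\log q^{1/2}<0$ on $B_q$, and $|g_i|^2>0$ almost everywhere on $B$ (outer functions are nonzero a.e.). Thus each identity pins down the a.e.\ sign of $\phi-\lambda_i$ on each boundary component in terms of $\Sgn(c_i)$.

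Next I would verify that $c_1$ and $c_2$ share a sign. If $c_1>0$ and $c_2<0$, then reading the two identities on $B_q$ gives $\phi<\lambda_1$ and $\phi>\lambda_2$ a.e.\ there, contradicting $\lambda_1<\lambda_2$; the reverse mixed case is symmetric. In the case $c_1,c_2>0$, the identities force $\phi-\lambda_2>0$ a.e.\ on $B_1$ and $\phi-\lambda_1<0$ a.e.\ on $B_q$, so for any $\lambda\in(\lambda_1,\lambda_2)$,
\[
\esssup_{B_q}(\phi-\lambda)\;\le\;\lambda_1-\lambda\;<\;0\;<\;\lambda_2-\lambda\;\le\;\essinf_{B_1}(\phi-\lambda).
\]
This is exactly the first hypothesis of Corollary \ref{cor:mainAnnulus} applied to the symbol $\phi-\lambda$, which then places $0$ inside the open interval of eigenvalues of $\phi-\lambda$ relative to $\aA$; equivalently, $\lambda\in E$. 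The case $c_1,c_2<0$ is symmetric and lands in the second hypothesis of Corollary \ref{cor:mainAnnulus}.

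I do not anticipate a substantive obstacle: once the sign-consistency step is settled, the corollary is a one-line consequence of Corollary \ref{cor:mainAnnulus}. The only point requiring care is the passage from an a.e.\ strict sign inequality such as $\phi-\lambda_2>0$ a.e.\ on $B_1$ to the essential-inf bound $\essinf_{B_1}(\phi-\lambda)\ge \lambda_2-\lambda$, but this is immediate from the definition of $\essinf$. Everything else is bookkeeping between the two sign cases.
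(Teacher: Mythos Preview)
Your proof is correct. The convexity argument---take two eigenvalues $\lambda_1<\lambda_2$, use Theorem~\ref{thm:mainAnnulus} to read off the a.e.\ sign of $\phi-\lambda_i$ on each boundary circle, rule out mixed signs for $c_1,c_2$, and then feed the resulting two-sided bound into Corollary~\ref{cor:mainAnnulus}---is clean and goes through exactly as you describe. The only small cosmetic point is that once you know $\phi>\lambda_2$ a.e.\ on $B_1$ and $\phi<\lambda_1$ a.e.\ on $B_q$, you can apply Corollary~\ref{cor:mainAnnulus} directly to $\phi$ (with $m=\esssup_{B_q}\phi\le\lambda_1$ and $M=\essinf_{B_1}\phi\ge\lambda_2$) rather than to $\phi-\lambda$; either way works.

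The paper takes a slightly different route. Rather than proving convexity of the eigenvalue set, it fixes $m=\esssup_{B_q}\phi$ and $M=\essinf_{B_1}\phi$ from the outset (with the symmetric choice handled in parallel), invokes Corollary~\ref{cor:mainAnnulus} to get $(m,M)\subset E$ when $m<M$, and then argues directly via the sign constraint from Theorem~\ref{thm:mainAnnulus} that no $\lambda>M$ or $\lambda<m$ can be an eigenvalue, so $E\subset[m,M]$. Your approach avoids naming the endpoints and is arguably tidier for the bare statement ``$E$ is an interval''; the paper's approach has the advantage of simultaneously identifying the interval as lying between $(m,M)$ and $[m,M]$. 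Both arguments rest on the same two ingredients (Theorem~\ref{thm:mainAnnulus} for the sign structure and Corollary~\ref{cor:mainAnnulus} for existence), so the difference is organizational rather than substantive.
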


 The following Corollary of Theorem \ref{thm:mainAnnulus} and Corollary \ref{cor:mainAnnulus}
 generalizes the main result of \cite{AC} for the annulus.

\begin{corollary}
 \label{cor:AC}
   With the hypotheses of Corollary \ref{cor:mainAnnulus},  if either
\[
  \int_B \log|\phi-M| \dmu= - \infty
\]
  or
\[
  \int_B \log|\phi-m| \dmu= - \infty,
\]
  then for each $\alpha$ the Toeplitz operator $T_{\phi}^{\alpha}$ has infinitely many eigenvalues
  in the interval $(-m,M)$.
\end{corollary}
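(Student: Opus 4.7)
My plan reduces the claim to a continuity argument for an ``index map'' defined via Theorem \ref{thm:mainAnnulus} applied to the shifted symbol $\phi - \lambda$. The initial observation is that $\lambda$ is an eigenvalue of $T_\phi^{\alpha}$ if and only if $0$ is an eigenvalue of $T_{\phi - \lambda}^{\alpha}$, so Theorem \ref{thm:mainAnnulus} applies directly with $\phi$ replaced by the still real-valued and bounded function $\phi - \lambda$ for any real $\lambda$. By Corollary \ref{cor:mainAnnulus}, every $\lambda \in (m, M)$ is already an eigenvalue of $\phi$ relative to $\aA$; the uniqueness clause of Theorem \ref{thm:mainAnnulus} applied to $\phi - \lambda$ then attaches to each such $\lambda$ a unique ``index'' $\alpha(\lambda) \in [0,1)$ such that $\lambda$ is an eigenvalue of $T_\phi^{\alpha(\lambda)}$.

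Next I would introduce the real-valued function
\[
F(\lambda) := \frac{1}{4\pi \log q}\left(\int_{B_1}\log|\phi - \lambda|\,d\mu_1 - \int_{B_q}\log|\phi - \lambda|\,d\mu_q\right),
\]
so that Theorem \ref{thm:mainAnnulus} asserts $\alpha(\lambda) \equiv F(\lambda) \pmod 1$. The corollary is then equivalent to the statement that the map $\lambda \mapsto F(\lambda) \bmod 1$ from $(m,M)$ to $[0,1)$ hits every value infinitely often, because each preimage of a fixed $\alpha$ yields an eigenvalue of $T_\phi^{\alpha}$ in the interval of interest.

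The analytic content is then standard. Working under the first case of Corollary \ref{cor:mainAnnulus}, so that $\phi \le m$ a.e.\ on $B_q$ and $\phi \ge M$ a.e.\ on $B_1$, for any compact sub-interval $[\lambda_0,\lambda_1] \subset (m,M)$ the quantity $|\phi - \lambda|$ is bounded away from $0$ and from $\infty$ uniformly in $\lambda$, so dominated convergence yields continuity of $F$ on $(m,M)$. For unboundedness, if $\int_B \log|\phi - M|\,d\mu = -\infty$ then, since $|\phi - M| \ge M - m$ on $B_q$ forces the $B_q$-integral to stay finite, the $B_1$-integral must diverge to $-\infty$; monotone convergence (after subtracting a common upper bound from $\log|\phi-\lambda|$) gives $\int_{B_1}\log|\phi-\lambda|\,d\mu_1 \to -\infty$ as $\lambda \uparrow M$, and dividing by the negative $4\pi \log q$ yields $F(\lambda) \to +\infty$. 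Symmetrically, the hypothesis $\int_B \log|\phi - m|\,d\mu = -\infty$ forces $F(\lambda) \to -\infty$ as $\lambda \downarrow m$. Continuity together with unboundedness at one endpoint is precisely what is needed for $F \bmod 1$ to cover $[0,1)$ infinitely often, finishing the plan.

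The main potential obstacle is bookkeeping rather than any new idea: the second case of Corollary \ref{cor:mainAnnulus} swaps the roles of $B_q$ and $B_1$ and flips the sign of the constant $c$ in Theorem \ref{thm:mainAnnulus}, and the two alternative integral hypotheses pick out which endpoint is the one at which $F$ blows up. None of these affect the existence of a blow-up at \emph{some} endpoint of $(m,M)$, so the intermediate value / continuity argument applies uniformly across the four combinations. All substantive content is already encoded in the index formula of Theorem \ref{thm:mainAnnulus}; this corollary is essentially a ``soft'' consequence extracted by letting the symbol vary through real translates.
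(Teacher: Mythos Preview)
Your proposal is correct and follows essentially the same approach as the paper: define the index function $F(\lambda)$ (the paper calls it $\beta_\lambda$), use monotone convergence to show it blows up as $\lambda$ approaches the relevant endpoint while the other boundary integral stays bounded, and conclude that $F\bmod 1$ hits every value in $[0,1)$ infinitely often. You are slightly more explicit than the paper about the continuity of $F$ on $(m,M)$ and about handling the four sign/boundary combinations, but there is no substantive difference in the argument.
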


The remainder of this section is organized as follows. Subsection \ref{subsec:proofMainAnnulus}
contains the proof of Theorem \ref{thm:mainAnnulus}.  The corollaries are proved in Subsection
\ref{subsec:proofCorMainAnnulus}.

\subsection{Proof of Theorem \ref{thm:mainAnnulus}.}
\label{subsec:proofMainAnnulus}
Let
\[
\aA^* \coloneqq \{f^*:f\in\aA\}.
\] 
In the context of Theorem \ref{thm:mainAnnulus}, suppose $T^{\alpha}_{\phi}g=0$.
Using the fact that, if $a\in\aA$, then $ag\in\HT_{\alpha}$ it follows that
\begin{align*}
0=&\innerP{T^{\alpha}_{\phi}g}{ag}\\
=&\int_{B}\phi\norm{g}^2 a^* \dmu.
\end{align*}
Since $\phi\norm{g}^2$ is real-valued
\[
\int_{B}\phi\norm{g}^2 a \dmu=0
\]
 too.
Thus $\phi\norm{g}^2$ annihilates $\aA\oplus\aA^*$.
In fact we know the following about measures that annihilate $\aA\oplus\aA^*$.
\begin{proposition}
\label{prop:Aperp}
	A measure $\nu<<\mu$ who's Radon-Nikodym derivative with respect to $\mu$ is in $\text{L}^1(B)$ annihilates $\aA\oplus\aA^*$ if and only if there exists a $c\in\MBBC$ such that
\[
	\dndm=c\log\left(\chi q^{-\frac{1}{2}}\right).
\]
\end{proposition}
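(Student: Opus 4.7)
The plan is to reduce annihilation of $\aA \oplus \aA^*$ to a countable family of Fourier identities, using that the Laurent monomials $\chi^n$, $n \in \MBBZ$, span a uniformly dense subspace of $\aA$ (classical approximation on the closed annulus). Thus $\nu$ annihilates $\aA \oplus \aA^*$ if and only if $\int_B \chi^n \dnu = 0$ and $\int_B \bar\chi^n \dnu = 0$ for every $n \in \MBBZ$.

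Set $h \coloneqq \dndm$ and parametrize $B_1$ and $B_q$ by $\theta \mapsto e^{i\theta}$ and $\theta \mapsto q e^{i\theta}$ respectively, so that, under the normalization $\mu_j(B_j)=2\pi$, both $\mu_1$ and $\mu_q$ pull back to Lebesgue $d\theta$ on $[0,2\pi)$. Writing $h_1, h_q$ for the restrictions of $h$ and $\widehat{h_j}(m) \coloneqq \int_0^{2\pi} e^{-im\theta}\, h_j(\theta)\,d\theta$, the test against $\chi^n$ becomes $\widehat{h_1}(-n) + q^n \widehat{h_q}(-n) = 0$ and the test against $\bar\chi^n$ becomes $\widehat{h_1}(n) + q^n \widehat{h_q}(n) = 0$. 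Replacing $n$ by $-n$ in the former and combining with the latter yields
\[
\widehat{h_1}(n) + q^n \widehat{h_q}(n) = 0 = \widehat{h_1}(n) + q^{-n} \widehat{h_q}(n),
\]
so $(q^n - q^{-n})\widehat{h_q}(n) = 0$ for every $n$. Since $0 < q < 1$, this forces $\widehat{h_1}(n) = \widehat{h_q}(n) = 0$ for $n \neq 0$, while at $n = 0$ it forces $\widehat{h_1}(0) + \widehat{h_q}(0) = 0$. By $L^1$ Fourier uniqueness, $h_1$ and $h_q$ are each constants satisfying $h_1 + h_q = 0$.

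Finally I would evaluate $\log\norm{\chi q^{-1/2}}$ directly on the two boundary components: it equals $-\tfrac12 \log q$ on $B_1$ and $+\tfrac12 \log q$ on $B_q$, so it too is a pair of opposite constants. Consequently, every admissible $h$ is a unique scalar multiple of $\log\norm{\chi q^{-1/2}}$, which proves the \emph{only if} direction. The \emph{if} direction is the same calculation read in reverse: substituting $h = c\log\norm{\chi q^{-1/2}}$ back into the two test integrals makes them vanish for every $n \in \MBBZ$, whence for every element of $\aA \oplus \aA^*$ by density.

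The only real obstacle is invoking uniform density of Laurent polynomials in $\aA$, needed to pass from testing against all of $\aA \oplus \aA^*$ to testing only against the monomials $\chi^n$ and $\bar\chi^n$; this is classical (Runge approximation on the closed annulus), and once it is granted the rest is a transparent Fourier computation.
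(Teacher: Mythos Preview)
Your proposal is correct and follows essentially the same route as the paper: reduce to testing against the Laurent monomials $\chi^n$ and $\bar\chi^n$, combine the resulting boundary Fourier identities to get $(q^n-q^{-n})\widehat{h_q}(n)=0$, deduce that $h$ is a pair of opposite constants on $B_1$ and $B_q$, and identify those constants with a scalar multiple of $\log\lvert\chi q^{-1/2}\rvert$. The only cosmetic differences are notation (you package things as Fourier coefficients) and your explicit appeal to Runge for the density step, which the paper simply asserts.
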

\begin{proof}
 If $\nu$ annihilates $\aA\oplus\aA^*$, then for each $n\in\MBBZ$
\begin{align*}
  \int_B  \chi^n \dnu &= \int_B \chi^n \dndm \dmu = \sum_{j=0}^1 \int_0^{2\pi} \dndm(q^je^{it}) q^j e^{int} \dt \text{ and}\\
  \int_B \overline{\chi}^n \dnu &= \int_B \overline{\chi}^n \dndm \dmu = \sum_{j=0}^1 \int_0^{2\pi} \dndm(q^je^{it}) q^j e^{-int} \dt.
\end{align*}
Hence
\[
\begin{aligned}
  \int_0^{2\pi} \dndm(e^{it}) e^{int} \dt &=-\int_0^{2\pi} \dndm(qe^{it}) q^n e^{int} \dt \text{ and}\\
  \int_0^{2\pi} \dndm(e^{it}) e^{-int}\dt &=-\int_0^{2\pi} \dndm(q e^{it}) q^n e^{-int} \dt.
\end{aligned}
\]
By replacing $n$ with $-n$ in the last equation we can see that
\[
  q^n\int_0^{2\pi} \dndm(qe^{it}) e^{int} \dt = q^{-n}\int_0^{2\pi} \dndm(qe^{it}) e^{int} \dt.
\]
  Hence for all $n\in\MBBZ$
\[
  \left(q^n-q^{-n}\right)\int_0^{2\pi} \dndm(q^j e^{it}) e^{int} \dt = 0.
\]
That means that for all $0\neq m\in\MBBZ$ and $j=0,1$ 
\[
  \int_0^{2\pi} \dndm(q^je^{it})e^{imt} \dt =0.
\]
So $\dndm$ must be equal to a constant, $a$, almost everywhere on each boundary with $a\coloneqq\left.\dndm\right|_{B_1}=-\left.\dndm\right|_{B_q}$.
If we choose $c\in\MBBR$ to be $\frac{a}{\log q^{-\frac{1}{2}}}$, then $\dndm=c\log\norm{zq^{-\frac{1}{2}}}$ almost everywhere on $B$.

Now let $\nu$ be a measure such that $\dndm=c\log\norm{zq^{-\frac{1}{2}}}$ almost everywhere on $B$.
Since $\Span\{z^n, \overline{z}^n\ |\ n\in\MBBZ\}$ is dense in $\aA\oplus\aA^*$ it will suffice to show that for $n\in\MBBZ$
\begin{align*}
	\int_B \chi^n\dnu&=\int_B \chi^n c\log\norm{\chi q^{-\frac{1}{2}}}\dmu=0 \text{ and}\\
	\int_B \overline{\chi}^n\dnu &= \int_B \overline{\chi}^nc\log\norm{\chi q^{-\frac{1}{2}}}\dmu=0
\end{align*}
to prove that $\nu$ annihilates $\aA\oplus\aA^*$.
If $n=0$, then 
\begin{align*}
	\int_B c\log\norm{\chi q^{-\frac{1}{2}}})\, \dmu &= \int_{B_1}c\log\left(q^{-\frac{1}{2}}\right)\dmu_1+\int_{B_q}c\log\left(q^{\frac{1}{2}}\right)\, \dmu_q=0.
\end{align*}
If $n\neq 0$, then for $j=1,q$, 
\[
    \int_{B_j} c\log\norm{\chi q^{-\frac12}} = c\log\left(j q^{-\frac12}\right)  \int_{B_j} \chi^n \dmu_1 = 0. 
\]
A similar computation shows that $\int_B \overline{\chi}^n c\log\norm{\chi q^{-\frac{1}{2}}}\dmu=0$.
\end{proof}
Combining the fact that if $T_\phi^\alpha g=0,$ then $\phi\norm{g}^2$ annihilates $\aA\oplus\aA^*$ and the above proposition we see that if $T_\phi^{\alpha} g=0,$ then there exists some $c\in\MBBR$ such that $\phi\norm{g}^2=c\log\norm{\chi q^{-\frac{1}{2}}}$.

The next objective is to show that $g$ is outer.
First we need the following definition. A function $\theta$ in $\HT_{\beta}$ is \df{inner} if $\norm{\theta}=1$ on $B$.
Sarason in \cite[Theorem 7]{Sarason} proved a version of inner-outer factorization for the annulus: given $f\in \HT_\alpha$, there is a $\beta$ and an inner function $\psi \in \HT_\beta$ and outer function $F\in \HT_{\alpha-\beta}$ such that
\begin{equation}
 \label{eq:IO-annulus}
  f=\psi F.
\end{equation}
Let $g=\psi F$ denote the inner-outer factorization of $g$ as an $\HT_{\alpha}$ function as in Equation \eqref{eq:IO-annulus} and let $\beta\in[0,1)$ be the index of $\psi$.
Since $\Index(\psi)+\Index(F)=\Index(g),$ we have that $\chi^{\beta}F\in \HT_{\alpha}$.
Similarly we have that $C\coloneqq \chi^{-\beta}\psi\in\HT$ which means that its restrictions to each of 
 the boundary components $B_1$ and $B_q$ is representable as a Fourier series 
 whose coefficients  we will denote by $\widehat{C}_q(n)$ and $\widehat{C}_1(n)$ respectively.  Moreover by \cite[Lemma 1.1]{Sarason} we know that $\widehat{C}_1(n)=q^{-n}\widehat{C}_{q}(n)$.
Since we showed above that $\phi\norm{g}^2=c\log\norm{\chi q^{-\frac{1}{2}}}$ for some $c\in\MBBR$, for any $n\in\MBBZ$
\begin{equation}
\label{eq:FourierAnnulus}
  \begin{split}
    0&=\innerP{T^{\alpha}_{\phi}g}{\chi^n\chi^{\beta}F}\\
     &=\int_{B}\phi\norm{g}^2 \norm{\chi}^{2\beta}\chi^{-\beta} \psi\overline{\chi}^n \dmu\\
     &=\int_{B}c\log\left(\norm{\chi q^{-1/2}}\right)\norm{\chi}^{2\beta}C\overline{\chi}^n \dmu\\
     &=c\log(q^{1/2})\left(q^{n+2\beta}\widehat{C}_q(n)-\widehat{C}_1(n)\right)\\
     &=c\log(q)\widehat{C}_q(n)\left(q^{n+2\beta}-q^{-n}\right).
  \end{split}
\end{equation}

From Equation \eqref{eq:FourierAnnulus} it follows that, for each $n$,
  either $\widehat{C}_q(n)=0$ or $n + \beta =0$.
Since $\beta\in[0,1)$ and $\widehat{C}_q(m)\neq 0$ for some $m$, it follows that $\widehat{C}_q(n)=0$ for $n\neq 0$ and $\beta= 0$.
Thus $\psi$ is a unitary constant and $g$ is outer.

Next assume that $g\in\HT_{\alpha}$ is outer and equation \eqref{eq:AnnulusMagic} holds. By Proposition \ref{prop:Aperp}, 
\[
	\innerP{T_\phi^\alpha g}{ag}=\int_B \phi\norm{g}^2\overline{a}\dmu=\int_B c\log\norm{\chi q^{-\frac{1}{2}}} \overline{a}=0,
\]
 for every $a\in \AD.$
 Further since $g$ is outer $\{ag\ |\ a\in\aA\}$ is dense in $\HT_\alpha$. Thus $T_\phi^\alpha g=0$ which proves the second part.

To prove the third part of the Theorem, simply choose $\alpha$ to be the index of $g$.
From \cite[Theorem 6]{Sarason} we know that, modulo one, the index of $g$ is
\[
\frac{-1}{2\pi\log q}\left(\int_{B_1}\log\norm{g}\dmu_1-\int_{B_q}\log\norm{g}\dmu_q\right).
\]
Applying the fact that $\norm{g}=\left(\frac{c\log\norm{\chi q^{-1/2}}}{\phi}\right)^{1/2}$
  the above expression simplifies to
\[
\frac{1}{4\pi\log q}\left(\int_{B_1}\log\norm{\phi}\dmu_1-\int_{B_q}\log\norm{\phi}\dmu_q\right).
\]

Finally, suppose that $T^{\alpha}_{\phi}g=0$ and also $T^{\beta}_{\phi}h=0$. From what has already been proved $g$ and $h$ are outer and there exists nonzero $c,d\in\MBBR$ such that
\[
\phi\norm{g}^2=c\log\norm{\chi q^{-1/2}}, \ \ \phi\norm{h}^2=d\log\norm{\chi q^{-1/2}}
\]
on $B$.
Since $\phi$ is almost everywhere nonzero we have that $\norm{g}^2=\frac{c}{d}\norm{h}^2$ and because $g$ and $h$ are outer, they are equal up to a complex scalar multiple, see \cite[Theorem 7.9]{Sarason}.

\subsection{Proofs of the corollaries}
\label{subsec:proofCorMainAnnulus}
 To prove Corollary \ref{cor:mainAnnulus}, observe that the first (resp. second) displayed inequality implies, for $m<\lambda<M,$ that
\begin{equation}
 \label{eq:psilambda}
\psi=\frac{\log\norm{\chi q^{-1/2}}}{\phi-\lambda}
\end{equation}
takes nonnegative (resp. nonpositive) values and is essentially bounded above (resp. below) and below (resp. above) away from zero.
 Hence by \cite[Theorem 9]{Sarason} there exists an outer function $g$ such that such that $|g|^2=\psi$ (resp. $|g|^2 = -\psi$). 
From Theorem \ref{thm:mainAnnulus} there is a $\alpha$ such that $T^{\alpha}_{\phi}g=\lambda g$.
 The case $\lambda=M$ (resp. $\lambda=m$) is similar, but now, while $\psi$ ‎is still essentially bounded below away from zero, it need not be integrable.
If $\psi$ is integrable, than the argument above shows it is an eigenvector with eigenvalue $M$ (resp. $m$).
On the other hand, if $M$ (resp. $m$) is an eigenvalue, then there is an outer function $g$ so that $\psi = |g|^2$ and hence $\psi$ is integrable.

It suffices to prove Corollary \ref{cor:intAnnulus} for $M=\essinf\{\phi(z): z\in B_1\}$ and $m=\esssup\{\phi(z): z\in B_q\}$.
By Corollary \ref{cor:mainAnnulus} if $m<M$ then the set of $\phi$ relative to $\aA$ contains the interval $(m,M)$ and otherwise \ref{thm:mainAnnulus} says that the set of eigenvalues is at most a point.
So we must show that if $\lambda>M$ or $\lambda<m$ then $\lambda$ is not an eigenvalue.
To this end, suppose $\lambda>M$.
Since $\lambda>M=\essinf\{\phi(z): z\in B_1\}$ we have that $\mu\left(\{\phi(z)-\lambda<0:z\in B_1\}\right)>0$.
On the other hand, if $\lambda$ is an eigenvalue, then there is a nonzero $d\in\MBBR$ and multivalued outer function $h\in \LT(B)$ such that $(\phi-\lambda)\norm{h}^2=d\log\norm{\chi q^{-1/2}}$ which  implies that either $\left.(\phi-\lambda)\right|_{B_q}$ is positive almost everywhere or $\left.(\phi-\lambda)\right|_{B_1}$ is positive almost everywhere.
This is a contradiction since we know that $(\phi-\lambda)$ is negative on the inner boundary of the annulus and not positive almost everywhere on the outer boundary.
This proves Corollary \ref{cor:intAnnulus} for $\lambda>M$.
The proof for the case $\lambda<m$ proceeds analogously. The details are omitted.

It suffices to prove Corollary \ref{cor:AC} when $M=\essinf\{\phi(z):z\in B_1\}$ and $m=\esssup\{\phi(z):z\in B_q\}$.
Assume that $\int_B\log\norm{\phi-M}\dmu=-\infty$.
Given $m<\lambda <M$, by Corollary \ref{cor:mainAnnulus}, there is a unique, up to scalar multiple, outer function $g_\lambda$ of $T_{\phi-\lambda}$ whose modulus squared is given by $\psi$ in equation \eqref{eq:psilambda}.
By Theorem \ref{thm:mainAnnulus}, the index $\alpha_\lambda\in [0,1)$ of $g_\lambda$ is congruent, modulo one, to 
\[
\beta_{\lambda}\coloneqq\frac{1}{4\pi\log q}\left(\int_{B_1}\log\norm{\phi-\lambda}\dmu_1-\int_{B_q}\log\norm{\phi-\lambda}\dmu_q\right).
\]
 Notice that as $\lambda$ approaches $M$ on $B_q$ we have that $\int_{B_q}\log\norm{\phi-\lambda}<\infty$ since $\phi\in\LINF$ and $\esssup\{\phi(z):z\in B_q\}<M$.
It follows from the monotone convergence theorem that $\beta_{\lambda}$ approaches $-\infty$ as $\lambda$ approaches $M$.
Hence $\alpha_\lambda$ takes every value in the interval $[r,1)$ infintely often for every choice of $0\le r<1$. 
Which completes the proof in the case $M=\essinf\{\phi(z):z\in B_1\}$ and $\int_B\log\norm{\phi-M}\dmu=-\infty$.
 The the proof for the case $m=\esssup\{\phi(z):z\in B_q\}$ and $\int_B\log\norm{\phi-m}\dmu=-\infty$ proceeds similarly.

\section{Toeplitz operators on the Neil parabola}
\label{sec:Neil}

Let $\MCA$ denote the Neil Algebra; i.e.,
 $\MCA$ is the unital subalgebra of the disc algebra $\MBBA(\MBBD)$ consisting of those $f$ with $f^\prime(0)=0$.
Each subspace $\MCV \subset \MBBC \oplus \MBBC z$ determines a subspace
\[
 \HT_\MCV = \HT \ominus \MCV
\]
 of the classical Hardy space $\HT$ which is invariant for $\cA$ in the following sense.
Each $a\in \MCA$ determines a bounded linear operator $M_a^{\MCV}$ on $\HT_\MCV$ defined by
\[
  M_a^\MCV f = af.
\]
Moreover, the mapping $\pi_\MCV:\cA \to B(\HT_\MCV)$ defined by $\pi_\MCV(a) = M_a^\MCV$ is a unital representation.
Here $B(\HT_\MCV)$ is the algebra of bounded operators on $\HT_\MCV$.
A further discussion of the collection of representations $\pi_\MCV$ can be found in Section \ref{sec:Boundaries}.

Let $\phi$ denote a real-valued function on the unit circle $\MBBT$.
The symbol $\phi$ determines a family, one for each $\MCV$, of Toeplitz operators.
Specifically, let $T_\phi^\MCV$ denote the \df{Toeplitz} operator on $\HT_\MCV$ defined by
\[
  \HT_\MCV \ni f \mapsto  P_\MCV \phi f,
\]
where $P_\MCV$ is the projection of $\LT(\MBBT)$ onto $\HT_\MCV$.

Letting $\chi(z)=z$ we get the following as the main result on the existence of eigenvalues for Toeplitz operators on $\HT_\MCV$.

\begin{theorem}
 \label{thm:mainNeil}
   Fix a real-valued $\phi \in \LINF$ and let $\MCV\subset \MBBC \oplus \MBBC z$ and nonzero  $g\in \HT_\MCV$ be given.
   If $T_\phi^\MCV g=0$, then $g$ is outer and moreover there is a $c\in\MBBC$ such that, on $\MBBT$,
\begin{equation}
 \label{eq:NeilMagic}
   \phi \norm{g}^2 = c\chi+(c\chi)^*.
\end{equation}

 Conversely, if there is a $c\in\MBBC$ and outer function $g\in \HT$ such that Equation \eqref{eq:NeilMagic} holds, then $T_\phi^\MCV g=0$, where $\MCV$ is uniquely determined by the values $g(0)$ and $g^\prime(0)$.

 In particular, there exists at most one $\MCV$ such that $T_\phi^\MCV$ has eigenvalue $0$ and the dimension of this eigenspace is at most one.
\end{theorem}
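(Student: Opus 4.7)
The plan is to follow the template of Subsection \ref{subsec:proofMainAnnulus}, replacing Proposition \ref{prop:Aperp} by its Neil analog and inner-outer factorization in $\HT_\alpha(\MBBA)$ by the classical $\HT$ factorization.

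Assume $T_\phi^\MCV g = 0$. Using $\MCA g\subset\HT_\MCV$, one computes
\[
 0 = \innerP{T_\phi^\MCV g}{ag} = \int_\MBBT \phi\norm{g}^2 a^*\,dm
\]
for every $a\in\MCA$; reality of $\phi\norm{g}^2$ gives the same identity with $a$ in place of $a^*$, so $\phi\norm{g}^2$ annihilates $\MCA + \MCA^*$ in $\LT(\MBBT)$. Since $\MCA$ is the hyperplane in $\AD$ cut out by $f^\prime(0)=0$, the subspace $\MCA + \MCA^*$ is the closed span of $\{z^n : n\in\MBBZ,\ n\neq\pm 1\}$, whose orthogonal complement is the two-dimensional span of $\chi$ and $\chi^*$. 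Combined with reality, this forces $\phi\norm{g}^2 = c\chi+(c\chi)^*$ for a unique $c\in\MBBC$. The converse is the mirror computation: given outer $g\in\HT$ satisfying \eqref{eq:NeilMagic} and setting $\MCV = \MCW(g)$ to be the one-dimensional subspace of $\MBBC\oplus\MBBC z$ orthogonal to $g(0)+g^\prime(0)z$ (the unique $\MCV$ making $\MCA g$ dense in $\HT_\MCV$), the identity $\int (c\chi+(c\chi)^*) a^*\,dm = 0$ for $a\in\MCA$ (since $a^\prime(0)=0$ and $a\in\HT$) together with density promotes $\phi g\perp\MCA g$ to $\phi g\perp\HT_\MCV$.

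To see $g$ is outer, factor $g = \psi F$ inner-outer in $\HT$. Since $\norm{F}=\norm{g}$ on $\MBBT$, also $\phi\norm{F}^2 = c\chi+(c\chi)^*$, and the converse produces $T_\phi^{\MCW(F)}F = 0$. The closed $\MCA$-invariant subspace generated by $g$ is $\overline{\MCA g} = \psi\HT_{\MCW(F)}\subset \HT_\MCV$. If $\psi$ is non-constant, the model space $K_\psi = \HT\ominus\psi\HT$ has dimension at least $1$; combined with $g\in\HT_\MCV$ and outerness of $F$ one gets $\dim K_\psi\geq \dim\MCV$, so $\HT_\MCV\ominus\psi\HT_{\MCW(F)}$ has dimension $\dim K_\psi+1-\dim\MCV\geq 1$. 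Testing $\phi g\perp\HT_\MCV$ against a nonzero vector in this complement, via the on-circle identity $\phi g = \psi(c\chi+(c\chi)^*)/\bar F$, produces a constraint that forces $c=0$ and hence $\phi\equiv 0$, contradicting the existence of a nontrivial eigenvector for a nonzero symbol. This outerness step is the main obstacle: ruling out non-trivial inner factors in $g$ requires test functions beyond $\MCA g$, and the finite codimension of $\HT_\MCV$ in $\HT$ plays the role the integer-valued index played in Equation \eqref{eq:FourierAnnulus}.

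Uniqueness of $\MCV$ and one-dimensionality of the eigenspace then follow from comparing identities for two nonzero eigenvectors $g_1,g_2$: a.e.-nonnegativity of $\norm{g_1}^2/\norm{g_2}^2 = (c_1\chi+(c_1\chi)^*)/(c_2\chi+(c_2\chi)^*)$ forces the two first-order trigonometric polynomials to be a positive scalar multiple of each other, whence $\norm{g_1}/\norm{g_2}$ is constant; both are outer, so $g_1$ is a scalar multiple of $g_2$ and $\MCV_1=\MCW(g_1)=\MCW(g_2)=\MCV_2$.
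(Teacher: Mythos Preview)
Your derivation of \eqref{eq:NeilMagic} from $T_\phi^\MCV g=0$, the converse via density of $\MCA g$ in $\HT_\MCV$, and the uniqueness paragraph all track the paper's argument essentially line for line (the density assertion is the content of Lemma \ref{lem:outerdense}, which you invoke but do not prove). The substantive divergence is the outerness step, and there your argument has a real gap.

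After factoring $g=\psi F$ and observing $\phi|F|^2=c\chi+(c\chi)^*$, you propose to locate a nonzero $h\in\HT_\MCV\ominus\psi\HT_{\MCW(F)}$ and test $\phi g\perp h$ to force $c=0$. But you never exhibit $h$ nor compute the pairing; ``produces a constraint that forces $c=0$'' is an assertion, not an argument, and it is not clear how a single orthogonality against an unspecified $h$ would annihilate $c$. The subsidiary claim $\dim K_\psi\ge\dim\MCV$ is also not justified by ``$g\in\HT_\MCV$ and outerness of $F$'' (it happens to hold by a case split on $\dim\MCV\in\{0,1,2\}$, but not for the reason given). Finally, the contradiction you reach (``nontrivial eigenvector for a nonzero symbol'') is not available: the theorem does not assume $\phi\not\equiv 0$, and when $c=0$ your scheme says nothing about $\psi$.

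The paper's route is much more direct and bypasses the complement $\HT_\MCV\ominus\psi\HT_{\MCW(F)}$ entirely. Because $z^2\HT\subset\HT_\MCV$, the functions $z^nF$ lie in $\HT_\MCV$ for every $n\ge 2$. Testing
\[
0=\langle T_\phi^\MCV g,\,z^nF\rangle=\int_\MBBT \phi|F|^2\,\psi\,\bar z^{\,n}\,d\mu=\langle (c\chi+(c\chi)^*)\psi,\,z^n\rangle_{\LT}
\]
gives the two--term recursion $c\,\psi_{n-1}+c^*\psi_{n+1}=0$ on the Taylor coefficients of $\psi$, valid for all $n\ge 2$. If $c\ne 0$ this forces $|\psi_{2k+1}|=|\psi_1|$ and $|\psi_{2k+2}|=|\psi_2|$ for all $k\ge 0$; square--summability then gives $\psi_k=0$ for $k\ge 1$, so $\psi$ is a unimodular constant and $g$ is outer. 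This is precisely the Neil analog of the computation in \eqref{eq:FourierAnnulus}; the test vectors $z^nF$ here play the role that $\chi^n\chi^\beta F$ played on the annulus.
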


 Before turning to the proof of Theorem \ref{thm:mainNeil}, we pause to state the analogs of corollaries \ref{cor:mainAnnulus}, and \ref{cor:intAnnulus}.
 By analogy with the case of the annulus, we  say that $\lambda$ is an
\df{eigenvalue of $\phi$ relative to $\MCA$} if there exists a $\MCV \subset \MBBC \oplus \MBBC z$ and nontrivial solution to $T_\phi^\MCV g=\lambda g$.

\begin{corollary}
 \label{cor:mainNeil}
    If there is a $c\in\mathbb C$ such that 
 \[
  \esssup\{ \phi(z): cz+(cz)^* <0\} = m < 0 < M = \essinf\{\phi(z): cz+(cz)^* >0\},
 \]
  then each $\lambda\in (m,M)$  is an eigenvalue of $\phi$ relative to $\cA$.
  Further, for each such $\lambda$ there is an essentially unique outer function $f_\lambda$ such that
\[
  (\phi-\lambda) |f_\lambda |^2 = c\chi + (c\chi)^*.
\]
  (Here $c$ is independent of $\lambda$.)

   Moreover, $M$ (resp. $m$) is an eigenvalue if and only if $\frac{c\chi+(c\chi)^*}{\phi-M}$
  (resp. $\frac{c\chi+(c\chi)^*}{\phi-m}$) is in $\text{L}^1.$
\end{corollary}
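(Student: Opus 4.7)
The plan is to imitate the proof of Corollary \ref{cor:mainAnnulus}, with the role of $\log|\chi q^{-1/2}|$ now played by $c\chi+(c\chi)^*$ and Sarason's outer-function theorem replaced by the classical Szeg\H{o} construction in $\HT$. Fix $\lambda\in(m,M)$ and set
\[
  \psi_\lambda \;=\; \frac{c\chi+(c\chi)^*}{\phi-\lambda}.
\]
The sign hypotheses on $\phi$ immediately give $\psi_\lambda\ge 0$ a.e.: on $\{c\chi+(c\chi)^*>0\}$ one has $\phi-\lambda\ge M-\lambda>0$, while on $\{c\chi+(c\chi)^*<0\}$ both numerator and denominator are negative with $\phi-\lambda\le m-\lambda<0$. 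Moreover $|\phi-\lambda|\ge\min(\lambda-m,\,M-\lambda)>0$ a.e., so $\psi_\lambda\in\LINF$.

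Next I would check that $\log\psi_\lambda\in\text{L}^1(\MBBT)$: the function $\log|\phi-\lambda|$ lies in $\LINF$ by the same bounds, and $\log|c\chi+(c\chi)^*|$ is integrable, having only logarithmic singularities at the two points on $\MBBT$ where $c\chi+(c\chi)^*$ vanishes. The classical outer-function construction in $\HT$ then yields an outer $f_\lambda\in\HT$ with $|f_\lambda|^2=\psi_\lambda$ on $\MBBT$, equivalently
\[
  (\phi-\lambda)|f_\lambda|^2 \;=\; c\chi+(c\chi)^*.
\]
Theorem \ref{thm:mainNeil}, applied with $\phi-\lambda$ in place of $\phi$, then produces a unique $\MCV\subset\MBBC\oplus\MBBC z$ with $T_{\phi-\lambda}^\MCV f_\lambda=0$, i.e.\ $T_\phi^\MCV f_\lambda=\lambda f_\lambda$; and $f_\lambda$ is essentially unique because an outer function in $\HT$ is determined by its modulus up to a unimodular constant.

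For the boundary value $\lambda=M$, one direction is immediate: if $M$ is an eigenvalue, Theorem \ref{thm:mainNeil} produces an outer $g\in\HT$ with $(\phi-M)|g|^2=c\chi+(c\chi)^*$, whence $\psi_M=|g|^2\in\text{L}^1$. For the converse I would assume $\psi_M\in\text{L}^1$ and repeat the construction; the main obstacle is verifying $\log\psi_M\in\text{L}^1$, since $\phi-M$ is no longer bounded away from zero. The elementary inequality $(\log x)^+\le x$ gives $(\log\psi_M)^+\in\text{L}^1$ from the hypothesis, while writing $\log\psi_M=\log|c\chi+(c\chi)^*|-\log|\phi-M|$ and invoking the trivial upper bound $|\phi-M|\le\|\phi\|_\infty+|M|$ shows that $(\log\psi_M)^-$ is dominated by the sum of $(-\log|c\chi+(c\chi)^*|)^+$ and a constant, both of which are integrable. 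Szeg\H{o}'s theorem and Theorem \ref{thm:mainNeil} then complete the argument, and the case $\lambda=m$ is symmetric.
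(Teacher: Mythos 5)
Your proposal is correct and follows essentially the same route as the paper: define $\psi_\lambda=\frac{c\chi+(c\chi)^*}{\phi-\lambda}$, check it is nonnegative, integrable and log-integrable (the singular log factor $\log\norm{c\chi+(c\chi)^*}$ being integrable while $\log\norm{\phi-\lambda}$ is bounded), invoke the Szeg\H{o}/outer-function construction in $\HT$, and then apply the converse direction of Theorem \ref{thm:mainNeil} to $\phi-\lambda$. You in fact supply more detail than the paper at the endpoints $\lambda=M,m$ (the paper only remarks that integrability of $\psi$ is the issue); the only point worth a line there is that in the ``only if'' direction Theorem \ref{thm:mainNeil} a priori yields $(\phi-M)\norm{g}^2=d\chi+(d\chi)^*$ for some $d\in\MBBC$, and the sign pattern of $\phi-M$ forces $d$ to be a positive multiple of $c$, which then gives $\frac{c\chi+(c\chi)^*}{\phi-M}\in\text{L}^1$ as claimed.
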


\begin{corollary}
 \label{cor:intNeil}
   The set of eigenvalues of $\phi$ relative to $\cA$ is either empty, a point,
  or an interval.
\end{corollary}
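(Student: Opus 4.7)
The plan is to show that the set $E$ of eigenvalues of $\phi$ relative to $\cA$ is convex in $\MBBR$; since the convex subsets of $\MBBR$ are precisely $\emptyset$, singletons, and intervals, this yields the claim. Concretely I will assume $\lambda_1,\lambda_2 \in E$ with $\lambda_1 < \lambda_2$ and prove that every $\lambda \in (\lambda_1,\lambda_2)$ also lies in $E$.

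First I apply Theorem \ref{thm:mainNeil} to the symbols $\phi - \lambda_j$, which produces constants $c_j \in \MBBC$ and outer functions $g_j \in \HT$ with
\[
  (\phi - \lambda_j)|g_j|^2 = c_j \chi + (c_j \chi)^* \quad \text{on } \MBBT.
\]
Neither $c_j$ can vanish: if $c_1 = 0$, then $\phi = \lambda_1$ almost everywhere (as $g_1$ is outer, hence nonzero a.e.), and then $(\lambda_1 - \lambda_2)|g_2|^2 = c_2\chi + (c_2\chi)^*$ forces a constant-sign left hand side to equal a sign-changing right hand side, a contradiction.

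The crux is to align $c_1$ and $c_2$. Each set $P_j := \{z \in \MBBT : c_j z + (c_j z)^* > 0\}$ is an open arc of arclength $\pi$, and on $P_j$ (resp.\ on its complementary arc $N_j$) the displayed identity forces $\phi > \lambda_j$ (resp.\ $\phi < \lambda_j$) almost everywhere. Since $\lambda_1 < \lambda_2$, on $P_2$ one has $\phi > \lambda_2 > \lambda_1$, so $P_2 \subset P_1$ modulo a null set; the equality of arclengths then forces $P_1 = P_2$ a.e., which in turn forces $c_2$ to be a positive real multiple of $c_1$. I expect this geometric alignment of the sign sets to be the main obstacle of the argument.

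Once $c_1$ and $c_2$ point in the same direction, fix $\lambda \in (\lambda_1, \lambda_2)$ and set $m := \esssup\{\phi(z) : c_1 z + (c_1 z)^* < 0\}$ and $M := \essinf\{\phi(z) : c_1 z + (c_1 z)^* > 0\}$. The previous paragraph gives $m \leq \lambda_1 < \lambda < \lambda_2 \leq M$, so $m - \lambda < 0 < M - \lambda$; hence Corollary \ref{cor:mainNeil} applies to the shifted symbol $\phi - \lambda$ with the choice $c = c_1$, and $0$ lies in the resulting open interval of eigenvalues. This supplies a subspace $\MCV$ and nonzero $g \in \HT_\MCV$ with $T_{\phi - \lambda}^\MCV g = 0$, equivalently $T_\phi^\MCV g = \lambda g$, so $\lambda \in E$ and the proof is complete.
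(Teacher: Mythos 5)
Your argument is correct, but it is organized differently from the paper's. The paper fixes $c$, $m$, $M$ as in the hypotheses of Corollary \ref{cor:mainNeil} (implicitly working in that setting), notes that $(m,M)$ already consists of eigenvalues, and then excludes any eigenvalue $\lambda>M$ or $\lambda<m$ by a sign--measure count: an eigenvalue at $\lambda$ would force $(\phi-\lambda)\norm{h}^2=c'\chi+(c'\chi)^*$ for an outer $h$, so $\{z\in\MBBT:\phi(z)>\lambda\}$ would have to be exactly a half circle, which is incompatible with $\lambda>M$. You instead prove convexity directly: from two eigenvalues $\lambda_1<\lambda_2$ you extract $c_1,c_2$ via Theorem \ref{thm:mainNeil}, show neither vanishes, align them by comparing the half-circle sign sets (the same geometric fact the paper uses in its exclusion step), and then apply Corollary \ref{cor:mainNeil} to the shifted symbol $\phi-\lambda$ to capture each intermediate $\lambda$; the shift is the right move, since only $m\leq\lambda_1$ and $M\geq\lambda_2$ are known, not $m<0<M$. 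What your route buys: it proves the unconditional statement without presupposing that the hypotheses of Corollary \ref{cor:mainNeil} hold for $\phi$ --- the alignment of $c_1$ and $c_2$, which you rightly flag as the main new step, is exactly what substitutes for having a single $c$ handed to you --- so it actually tightens a point the paper's terse proof leaves implicit. What the paper's route buys: given the sign hypotheses it is shorter, and it additionally pins the eigenvalue set inside $[m,M]$, information your convexity argument does not directly produce (and is not needed for the statement).
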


If we are given a $\phi$ that satisfies Corollary \ref{cor:mainNeil} and a $\lambda\in(m,M)$ the following corollary allows us to determine exactly what $\HT_\MCV$ space the resulting outer function $f_\lambda$ is in.
But first for $z\in\MBBD$ and $t$ real, let
\[
  H(z,t) =\frac{e^{it}+z}{e^{it}-z}.
\]

\begin{corollary}
  \label{cor:lambdatov}
Under the hypotheses of corollary \ref{cor:mainNeil} let
\[
h_c(z)\coloneqq \exp\left(\frac{1}{2\pi} \int_{-\pi}^\pi H(z,t) \log\norm{ce^{it}+c^*e^{-it}}^{1/2} \dt\right),
\]
and
\[
g_{\lambda}(z) \coloneqq \exp\left(\frac{1}{2\pi}\int_{-\pi}^\pi H(z,t)\log\norm{\phi(t)-\lambda}^{-1/2} \dt \right).
\]
The eigenvector $f_{\lambda}$ of $T_{\phi}^\MCV$ associated with the eigenvalue $\lambda$ is in the $\HT_\MCV$ space
  where (nontrivial) $\MCV\subset \MBBC \oplus \MBBC z$ is orthogonal to
\[
h_c(0) {g}_{\lambda}(0)+\left(h_c(0) g_{\lambda}^\prime(0)+ h^{\prime}_c(0) g_{\lambda}(0)\right)z
\]
  Moreover since $h_c$ and $g_{\lambda}$ are outer functions neither $h_c(0)$ or $g_{\lambda}(0)$ are zero.
\end{corollary}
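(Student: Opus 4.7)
The plan is to identify $f_\lambda$ up to a unimodular scalar with the product $h_c g_\lambda$ and then read off its zeroth and first Taylor coefficients via the product rule; these two numbers determine $\MCV$ by Theorem \ref{thm:mainNeil}.

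First, Corollary \ref{cor:mainNeil} yields $(\phi-\lambda)|f_\lambda|^2 = c\chi + (c\chi)^*$ on $\MBBT$. The sign hypotheses on $\phi$ in Corollary \ref{cor:mainNeil} force $c\chi + (c\chi)^*$ and $\phi-\lambda$ to have the same sign almost everywhere on $\MBBT$ for each $\lambda\in(m,M)$. Consequently
\[
  |f_\lambda|^2 \;=\; \frac{|c\chi + (c\chi)^*|}{|\phi-\lambda|} \quad \text{a.e. on } \MBBT.
\]

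Next, I will check that $h_c g_\lambda$ has this same modulus on the boundary. By the standard Herglotz/outer-function construction, $h_c$ and $g_\lambda$ as defined are outer functions in $\HT$ whose boundary moduli are $|ce^{it}+c^*e^{-it}|^{1/2}$ and $|\phi(t)-\lambda|^{-1/2}$ respectively, so $|h_c g_\lambda|^2 = |f_\lambda|^2$ a.e. on $\MBBT$. Since outer functions are determined by their boundary moduli up to a unimodular constant, there exists $\gamma\in\MBBT$ with $f_\lambda = \gamma\, h_c g_\lambda$.

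Now apply the Leibniz rule at $0$:
\[
  f_\lambda(0) = \gamma\, h_c(0)\, g_\lambda(0), \qquad f_\lambda^\prime(0) = \gamma\bigl(h_c(0)\, g_\lambda^\prime(0) + h_c^\prime(0)\, g_\lambda(0)\bigr).
\]
The orthogonal projection of $f_\lambda$ onto the two-dimensional subspace $\MBBC\oplus\MBBC z \subset \HT$ equals $f_\lambda(0) + f_\lambda^\prime(0)\,z$. Since $f_\lambda\in\HT_\MCV = \HT\ominus\MCV$ and $\MCV\subset\MBBC\oplus\MBBC z$, the condition $f_\lambda\perp\MCV$ is equivalent to this projection being orthogonal to $\MCV$. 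Therefore $\MCV$ is the (one-dimensional) orthogonal complement, inside $\MBBC\oplus\MBBC z$, of the vector displayed in the statement; the unimodular factor $\gamma$ drops out.

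Finally, since $h_c$ and $g_\lambda$ are outer they have no zeros in $\MBBD$, so $h_c(0)\neq 0$ and $g_\lambda(0)\neq 0$, which both justifies that the projection is nonzero (so $\MCV$ is indeed nontrivial and one-dimensional) and gives the last sentence of the corollary. I expect no serious obstacle; the only point needing care is the sign compatibility in the first step, which is exactly where the hypotheses of Corollary \ref{cor:mainNeil} are used to turn $|f_\lambda|^2$ into a quotient of absolute values that matches $|h_c g_\lambda|^2$.
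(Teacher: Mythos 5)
Your argument is correct and is essentially the paper's own proof: both identify the outer eigenvector $f_\lambda$ (up to a unimodular constant) with $h_c g_\lambda$ via the outer-function representation of its boundary modulus $\bigl|c\chi+(c\chi)^*\bigr|/\norm{\phi-\lambda}$, then read off $f_\lambda(0)$ and $f_\lambda^\prime(0)$ by the Leibniz rule and invoke Theorem \ref{thm:mainNeil} to pin down $\MCV$ as the subspace of $\MBBC\oplus\MBBC z$ orthogonal to $f_\lambda(0)+f_\lambda^\prime(0)z$. Your explicit handling of the sign compatibility and the unimodular factor $\gamma$ is slightly more careful than the paper's, but it is the same route.
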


There is no analog of Corollary  \ref{cor:AC} or of the main result of \cite{AC} for the Neil parabola.
  In fact Corollary \ref{cor:lambdatov} implies that if $e\subset \MBBC \oplus \MBBC z$ is spanned by $1,$ then no Toeplitz operator on $\HT_e$ has eigenvalues.
Although  an easier way to see that no Toeplitz operator on $\HT_e$ has eigenvalues is to note that $\HT_e=z\HT$.
Moreover for similar reasons no Toeplitz operator on $\HT_\MCV$ has eigenvalues if $\MCV=\{0\}$ or $\MCV = \MBBC \oplus \MBBC z$.
  In fact the following corollary says that there are many nonzero proper $\MCV$, not just $e$, such that $T^\MCV_\phi$ has no eigenvalues.
  To prove this we identify each nonzero proper $\MCV$ with an element of the complex projective line, $\PCT$, which is
  $X=\MBBC^2\setminus \{0\}$ modulo the equivalence relation
  $v\sim w$ if and only if there is a complex number $\lambda$ such
  that $v=\lambda w$.  Let $\pi:X \to \PCT$ denote the quotient mapping of $v\in\MBBC^2\setminus\{0\}$.
	The space $\PCT$ can be realized as a Riemann surface by the charts $\Phi_j : \MBBC\to \PCT$ defined by $\Phi_0(\zeta) = \hVec{\zeta}{1}^T$ and $\Phi_1(\upxi)=\hVec{1}{\upxi}^T$.
 Indeed, the transition mappings between these charts are $\zeta=\frac{1}{\upxi}$ and $\upxi=\frac{1}{\zeta}$.
 A map $F:\MBBR\to\PCT$ is differentiable if the maps $\Phi_0^{-1} \circ F$ and $\Phi_1^{-1} \circ F$ are differentiable where defined.
 Finally, if $I$ is an interval in $\MBBR$ and $g:I\to X$ is twice differentiable, then so is $\pi \circ g$ and in this case the Hausdorff dimension of the range of $\pi\circ g$ is at most one and in this sense the range is a relatively small subset of $\PCT$. For a discussion of properties of the Hausdorff dimension see \cite{Sc}.

\begin{corollary}
  \label{cor:notAC}
  The function, $\Lambda$, from $(m,M)$ to $\PCT$ defined by
\[
\Lambda:\lambda\mapsto\pi\left(\vVec{h_c(0) g_{\lambda}(0)}{h^{\prime}_c(0)g_{\lambda}(0)+ h_c(0) g^{\prime}_{\lambda}(0)}\right)
\]
is locally Lipschitz with respect to $\lambda$ on $(m,M)$.
Thus in addition to $\MCV=\Span\{1\}$ there exist nonzero proper $\MCV\subset \MBBC \oplus \MBBC z$ such that $T^\MCV_\phi$ has no eigenvalues.
\end{corollary}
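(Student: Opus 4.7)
The strategy is to use the explicit parameterization in Corollary \ref{cor:lambdatov} to reduce the second assertion to a dimension count. Any $\MCV$ that supports an eigenvalue $\lambda$ corresponds, via Theorem \ref{thm:mainNeil}, to a unique point of $\PCT$; if the parameterizing map $\Lambda$ is locally Lipschitz on $(m, M)$, then its image is a one-dimensional subset of the two-real-dimensional manifold $\PCT$, so many $\MCV$ lie outside it.

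To prove local Lipschitz continuity, I would fix a compact subinterval $[m_1, M_1] \subset (m, M)$. The hypothesis of Corollary \ref{cor:mainNeil} gives
\[
 \min(M - M_1,\, m_1 - m) \;\leq\; \norm{\phi(t) - \lambda} \;\leq\; \Norm{\phi}_\infty + \max(\norm{m_1}, \norm{M_1})
\]
uniformly for $(\lambda, t) \in [m_1, M_1] \times \MBBT$. Consequently both the integrand $\log \norm{\phi(t) - \lambda}^{-1/2}$ appearing in the formula for $g_\lambda$ and its $\lambda$-derivative $\bigl(2(\phi(t) - \lambda)\bigr)^{-1}$ are essentially bounded, uniformly in $\lambda$. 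Differentiation under the integral sign, justified by dominated convergence, then shows that $\lambda \mapsto g_\lambda(z)$ is jointly $C^1$ on a neighborhood of $\{0\} \times [m_1, M_1]$; in particular $g_\lambda(0)$ and $g_\lambda'(0)$ are Lipschitz in $\lambda$ on $[m_1, M_1]$. Because $h_c$ does not depend on $\lambda$ and both $h_c(0)$ and $g_\lambda(0)$ are nonzero (being values of outer functions at the origin), the curve
\[
v(\lambda) \;=\; \vVec{h_c(0)\, g_\lambda(0)}{h_c'(0)\, g_\lambda(0) + h_c(0)\, g_\lambda'(0)}
\]
is locally Lipschitz into $\MBBC^2 \setminus \{0\}$, and composition with the smooth quotient $\pi$ yields the desired locally Lipschitz map $\Lambda = \pi \circ v$.

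For the existence of $\MCV$ admitting no eigenvalues, I would identify each nonzero proper $\MCV \subset \MBBC \oplus \MBBC z$ with its orthogonal complement in $\PCT$; this bijection is smooth. By Corollary \ref{cor:lambdatov}, every $\MCV$ supporting an eigenvalue $\lambda \in (m, M)$ corresponds under this identification to a point in $\Lambda((m, M))$, and by Corollary \ref{cor:intNeil} the eigenvalue set is an interval, so any endpoint eigenvalues contribute at most two further points. Since $\Lambda$ is locally Lipschitz on an interval, its image has Hausdorff dimension at most one (see \cite{Sc}) and hence two-dimensional Hausdorff measure zero in $\PCT$. The complement therefore has full two-dimensional measure and contains uncountably many nonzero proper $\MCV$ for which $T_\phi^\MCV$ has no eigenvalues. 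The main obstacle is the careful justification of differentiation under the integral sign together with the uniform bounds above; once that is settled, the remainder is a direct assembly of Corollaries \ref{cor:mainNeil}, \ref{cor:intNeil}, and \ref{cor:lambdatov} together with the standard fact that locally Lipschitz maps do not increase Hausdorff dimension.
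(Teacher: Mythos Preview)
Your proposal is correct and follows essentially the same route as the paper: uniform bounds on $|\phi-\lambda|$ over compact subintervals of $(m,M)$, differentiation under the integral sign via dominated convergence to obtain local Lipschitz continuity of $\Lambda$, and then a Hausdorff-dimension argument showing the image is a proper subset of $\PCT$. The paper phrases the regularity step as twice (indeed infinitely) differentiable rather than $C^1$, and works in the chart $\Phi_1$ explicitly, but the substance is identical.
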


The remainder of this section is organized as follows.
Subsection \ref{subsec:proofmainNeil} contains the proof of Theorem \ref{thm:mainNeil}.
The corollaries are proved in Subsection \ref{subsec:proofcormainNeil}.

\subsection{Proof of Theorem \ref{thm:mainNeil}}
\label{subsec:proofmainNeil}
 Let $\mu$ denote normalized arclength measure on $\MBBT$ and let
\[
 \cA^* \coloneqq \{f^*: f\in\cA\}.
\]
 In the context of Theorem \ref{thm:mainNeil}, suppose $T_\phi^\MCV g =0$.
 Using the fact that, if $a\in\cA$, then $ag \in \HT_\MCV$ it follows that
 \begin{align*}
 0 = & \innerP{T_\phi^\MCV\ g}{ag}\\
   = & \int_\MBBT \phi \norm{g}^2 \overline{a} \dmu.
 \end{align*}
Since $\phi\norm{g}^2$ is real-valued it is also the case that
\[
\int_\MBBT\phi\norm{g}^2 a \dmu=0.
\]
Thus $\phi\norm{g}^2$ annihilates $\MCA\oplus\MCA^*$. In fact we know the following about measures that annihilate $\MCA\oplus\MCA^*$.
\begin{proposition}
A measure $\nu<<\mu$ who's Radon-Nikodym derivative with respect to $\mu$ is in $\text{L}^1(\MBBT)$ annihilates $\MCA\oplus\MCA^*$ if and only if there exists a $c\in\mathbb C$ such that 
\[
	\dndm=c\chi+(c\chi)^*.
\]
\end{proposition}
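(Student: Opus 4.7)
The plan is to mirror the Fourier-coefficient computation from the proof of Proposition \ref{prop:Aperp}, but with the algebra structure considerably simplified: on $\MBBT$ there is only one boundary circle, and $\MCA$ is the uniform closure of $\Span\{1,\chi^2,\chi^3,\ldots\}$ (subtract $p^\prime(0)\chi$ from any polynomial approximation to a given $f\in\MCA$). Consequently the monomials $\chi^n$ for $n\in\MBBZ$ with $|n|\neq 1$ are uniformly dense in $\MCA\oplus\MCA^*$, and annihilation of $\MCA\oplus\MCA^*$ by $\nu$ is equivalent to the vanishing of $\int_\MBBT \chi^n h\dmu$ for every such $n$, where $h\coloneqq\dndm\in \text{L}^1(\MBBT)$.

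For the ``if'' direction I would take $h=c\chi+(c\chi)^*=c\chi+\overline{c}\,\overline{\chi}$, multiply by $\chi^n$ or $\overline{\chi}^n$ with $n\in\{0,2,3,\ldots\}$, and integrate: each resulting term reduces to $\int_\MBBT\chi^k\dmu$ with $k\neq 0$, which vanishes by orthogonality of the trigonometric system. For the ``only if'' direction I would apply the hypothesis to the generators $\chi^n$ and $\overline{\chi}^n$ ($n\in\{0,2,3,\ldots\}$) to read off that the Fourier coefficients $\widehat h(m)$ vanish for all $m\in\MBBZ$ with $|m|\neq 1$. Uniqueness of Fourier coefficients for $\text{L}^1(\MBBT)$ functions then forces $h=\widehat h(1)\chi+\widehat h(-1)\overline{\chi}$ almost everywhere, and setting $c=\widehat h(1)$ recovers the asserted form, with the identification $\widehat h(-1)=\overline c$ amounting to the reality of $h$ (which is automatic in the intended application $h=\phi\norm{g}^2$).

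There is essentially no obstacle here. The Neil situation is structurally simpler than the annulus one because the cross-boundary bookkeeping and the algebraic identities $q^n-q^{-n}=0\Rightarrow n=0$ that drove the annulus computation play no role; the entire argument collapses to the observation that $\MCA$ omits precisely the Fourier mode $\chi$ and $\MCA^*$ omits precisely $\overline{\chi}$, so an $\text{L}^1$ function orthogonal to all other modes is supported on these two.
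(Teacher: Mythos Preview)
Your proposal is correct and follows essentially the same approach as the paper: both reduce the question to the vanishing of the Fourier coefficients $\widehat h(n)$ for $n\in\MBBZ\setminus\{-1,1\}$ by testing against the monomials $\chi^n$, and then invoke uniqueness of Fourier series in $\text{L}^1(\MBBT)$. You are in fact slightly more careful than the paper in flagging that the identification $\widehat h(-1)=\overline{c}$ uses the reality of $h$, a hypothesis not explicit in the proposition but satisfied in the application.
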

\begin{proof}
Assume that $\nu$ annihilates $\MCA\oplus\MCA^*$, then for each $n\in\MBBZ\setminus\{1,-1\}$
\[
	\int_\MBBT \chi^n \dnu=\int_\MBBT \chi^n\dndm \dmu=0
\]
Hence for any $n$ different from $\pm 1$ the corresponding Fourier coefficent of $\dndm$ is zero.
Thus there exists $c\in\MBBC$ such that all the Fourier coefficients of $\dndm - c\chi + (c\chi)^*$ are $0$, which implies that $\dndm=c\chi+(c\chi)^*$ almost everywhere on $\MBBT$.

Now let $\nu$ be a measure such that $\dndm=c\chi+(c\chi)^*$ almost everywhere on $\MBBT$.
Since the span of the set $\left\{z^n\ |\ n\in\MBBZ\setminus\{1,-1\}\right\}$ is dense in $\MCA\oplus\MCA^*$ it will suffice to show that for $n\in\MBBZ\setminus\{1,-1\}$
\[
	\int_\MBBT \chi^n\dnu=\int_\MBBT\chi^n\left(c\chi+(c\chi)^*\right)\dmu=0
\]
to prove that $\nu$ annihilates $\MCA\oplus\MCA^*$. 
But for $n\in\MBBZ\setminus\{1,-1\}$
\[
	\int_\MBBT\chi^n\left(c\chi+(c\chi)^*\right)\dmu=\int_\MBBT c\chi^{n+1}+\left(c\chi^{n-1}\right)^*=0.
\]
\end{proof}
Combining the fact that if $T_\phi^\MCV g=0,$ then $\phi\norm{g}^2$ annihilates $\MCA\oplus\MCA^*$ and the above proposition we see that if $T_\phi^\MCV g=0,$ then there exists some $c\in\MBBC$ such that $\phi\norm{g}^2=c\chi+(c\chi)^*$.

  The next objective is to show $g$ is outer.
  To this end, let $g=\Psi F$ denote the inner-outer factorization of $g$ as an $\HT$ function.
  Observe that, $z^n F\in \HT_\MCV$ for  integers $n\ge 2$.
  Thus, for such $n$,
\[
 \begin{split}
   0 = & \innerP{T_\phi^v g}{F z^n} \\
     = & \int_\MBBT \phi |g|^2 \Psi \overline{z}^n \dmu\\
     = & \innerP{(cz+(cz)^*)\Psi}{z^n}_{\LT}.
 \end{split}
\]
 It follows, writing $\Psi=\sum_{k=0}^\infty \Psi_k z^k$, that
\[
  c\Psi_{n-1}+c^* \Psi_{n+1}=0
\]
 for $n\ge 2$.  In particular,
\[
  \Psi_{2k+1} = (-\frac{c}{c^*})^k \Psi_1
\]
 for $k\ge 1$ and likewise,
\[
 \Psi_{2k+2} = (-\frac{c}{c^*})^k \Psi_2.
\]
 Because $\Psi\in \HT$ these last two equations imply that
  $\Psi_k=0$ for $k\ge 1$; i.e.,  $\Psi$ is a unimodular constant
 and thus $g$ is outer, and the first part of the
  Theorem is established.

  The proof of the converse uses the following lemma.
\begin{lemma}
 \label{lem:outerdense}
  Given a nonzero $\MCV\subsetneqq \MBBC \oplus \MBBC z$, if $g$ is outer and in $H^2_\MCV$, then the set
\[
 \{ag: a\in\cA\}
\]
 is dense in $\HT_\MCV$.
\end{lemma}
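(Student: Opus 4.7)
The plan is to show that the closed subspace $K \coloneqq \overline{\cA g}$ of $\HT_\MCV$ exhausts all of $\HT_\MCV$, exploiting the fact that $\cA$ has codimension one inside the polynomial algebra. Since $\MCV$ is nonzero and proper, it is one-dimensional; fix a basis $v = v_0 + v_1 z$ for it. Because $g$ is outer, $g(0)\neq 0$, and the orthogonality $\innerP{g}{v}=g(0)\overline{v_0}+g'(0)\overline{v_1}=0$ forces $v_1\neq 0$ (for otherwise $v_0\neq 0$ would yield $g(0)=0$). So I may and will assume $v_1\neq 0$.

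The engine of the proof is the vector space decomposition $\MBBC[z] = \cA_P \oplus \MBBC z$, where $\cA_P$ denotes the polynomials whose $z$-coefficient vanishes. Multiplying through by $g$ gives $\MBBC[z]\cdot g = \cA_P g + \MBBC zg$, and outerness of $g$ makes this dense in $\HT$. Since $K$ is closed and a finite-dimensional extension of a closed subspace is again closed, $K + \MBBC zg$ is a closed subspace of $\HT$ containing the dense set $\cA_P g + \MBBC zg$, hence equal to $\HT$. Two Taylor-coefficient checks then anchor the argument: on the one hand $\innerP{ag}{v}=a(0)\innerP{g}{v}=0$ for all $a\in\cA$ (because $a$ has no first-order term), so $K\subseteq \HT_\MCV$; on the other hand $\innerP{zg}{v}=g(0)\overline{v_1}\neq 0$, so $zg\notin \HT_\MCV$.

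Given any $h\in \HT_\MCV$, I would then invoke the preceding step to write $h = f+\lambda zg$ with $f\in K$ and $\lambda\in \MBBC$. Since $h-f\in \HT_\MCV$ but $zg\notin \HT_\MCV$, one must have $\lambda=0$, whence $h = f \in K$. This gives $\HT_\MCV\subseteq K$, proving density.

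The only genuine obstacle is seeing that outerness rules out the degenerate case $v_1=0$ (that is, $\MCV=\MBBC\cdot 1$, for which $\HT_\MCV = z\HT$ and which therefore contains no outer $\HT$-function), so that the hypothesis is non-vacuous precisely in the regime where the codimension-one argument above can conclude. Once this is noted the proof reduces to the two Taylor-coefficient computations and the standard fact that a closed subspace plus a line is closed.
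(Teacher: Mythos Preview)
Your proof is correct and takes a genuinely different route from the paper's. The paper argues by approximation: given $f\in\HT_\MCV$, it picks $a_n\in\HINF$ with $a_ng\to f$ (using outerness of $g$), sets $b_n=a_n-a_n'(0)\,z\in\cA$, and then shows $a_n'(0)\to 0$ by computing $\lim a_n(0)$ and $\lim (a_ng)'(0)$ and exploiting the constraint $f(0)g'(0)=f'(0)g(0)$ that membership in $\HT_\MCV$ imposes. Your argument is more structural: you observe that $K+\MBBC\,zg$ is closed (closed plus a line) and contains the dense set $\MBBC[z]\,g$, hence equals $\HT$; then the two Taylor checks $\innerP{ag}{v}=a(0)\innerP{g}{v}=0$ and $\innerP{zg}{v}=g(0)\overline{v_1}\neq 0$ force the $zg$-component of any $h\in\HT_\MCV$ to vanish. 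Both proofs ultimately rest on the same codimension-one fact $\HINF=\cA\oplus\MBBC z$ and the same use of $g(0)\neq 0$, but the paper produces an explicit approximating sequence while you trade that for the closed-plus-finite-dimensional lemma and a cleaner orthogonality bookkeeping. Your approach has the minor advantage of never touching limits of Taylor coefficients; the paper's has the minor advantage of being slightly more constructive.
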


\begin{proof}
 Let $f\in\HT_\MCV$ be given.
Since $g$ is outer there exists a sequence of functions $\{a_n\}\subset\HINF(\MBBD)$ such that $a_n g$ converges to $f$ in $\HT(\MBBD)$.
Let $b_n\coloneqq a_n-a_n^{\prime}(0)$, so each $b_n\in\cA$.
To show that $b_n g$ also converges to $f$ it suffices to show that $a_n^{\prime}(0)$ converges to zero.
To do this note that for each nonzero proper $\MCV$ there exists a pair $(\alpha, \beta)\in\MBBC^2$ not both zero such that if $h\in\HT_\MCV$, then there exists a $\zeta\in\MBBC$ and $q\in\HT(\MBBD)$ such that $h=\zeta\alpha +\zeta\beta z+z^2q$.
Since $g(0)\neq 0$ this means that
\[
\frac{f(0)}{g(0)}\cdot g^{\prime}(0)=f^{\prime}(0).
\]
Because $a_n(0)g(0)$ converges to $f(0)$ and $(a_n g)^{\prime}(0)$ converges to $f'(0)$ we have that
\begin{align*}
\lim_{n\to\infty} a_n(0) &= \frac{f(0)}{g(0)} \\
\lim_{n\to\infty}a_n^{\prime}(0) &=\frac{f^{\prime}(0)-\frac{f(0)}{g(0)}\cdot g^{\prime}(0)}{g(0)}=0
\end{align*}
%
\end{proof}

  To prove the converse, suppose that $g$ is outer and there is a $c\in\mathbb C$ such that
  Equation \eqref{eq:NeilMagic} holds.
Let $\MCV$ be the nonzero subspace of $\MBBC \oplus \MBBC z$ such that $g(0)+g^{\prime}(0)z\in\MCV^{\perp}.$ 
 In particular, $g\in\HT_\MCV$.
It follows that, for any $a\in\cA$,
\[
  \langle T_\phi^\MCV g, ag\rangle
    = \int_\MBBT \phi \norm{g}^2 a^* \dmu=0
\]
 and thus, in view of Lemma \ref{lem:outerdense}, $T_\phi^\MCV g =0$.

 Finally, suppose that $T_\phi^\MCV g = 0$ and $T_\phi^\MCW h=0$. From what has
 already been proved $g$ and $h$ are outer and there exists $c,d\in\MBBC$ such that
\[
   \phi \norm{g}^2 = c\chi +(c\chi)^*, \ \  \phi \norm{h}^2 = d\chi+(d\chi)^*
\]
 on $\mathbb T$.  It follows that $\phi$ is positive almost everywhere
 both where $c\chi+(c\chi)^*$ and $(d\chi)+(d\chi)^*$ are positive. Hence
  $c=td$ for some positive real number $t$. But then, $t \norm{g}=\norm{h}$
  and because $g$ and $h$ are outer, they are equal up to a (complex) scalar
  multiple.

\subsection{Proofs of the corollaries}
\label{subsec:proofcormainNeil}
   To prove Corollary \ref{cor:mainNeil}, observe that the hypotheses imply, for $m<\lambda<M,$ that
\[
  \psi = \frac{c\chi+(c\chi)^*}{\phi-\lambda}
\]
 is  nonnegative, in $\text{L}^1$ and moreover
\begin{equation}
 \label{eq:logint}
  \int_\MBBT \log\norm{\psi}\dmu > -\infty
\end{equation}
 because the same is true with $\psi$ replaced by $c\chi+(c\chi)^*$, $\phi$ is essentially bounded and $\Sgn\left(c\chi+(c\chi)^*\right)=\Sgn(\phi)$.
 Hence there is an outer function $g\in \HT$ such that
\[
  (\phi-\lambda)\norm{g}^2 =c\chi+(c\chi)^*.
\]
  From Theorem \ref{thm:mainNeil} there is a nonzero proper $\MCV$ such that
\[
  T_{\phi}^\MCV g = \lambda g.
\]

The case $\lambda=M$ (resp. $\lambda=m$) are similar, with the only issue being that a hypothesis is needed to guarantee that $\psi$, as defined above, is integrable.

  Turning to the proof of Corollary \ref{cor:intNeil}, because Corollary \ref{cor:mainNeil} implies the interval $(m,M)$ is contained in the set of eigenvalues of $\phi$ with respect to $\cA$, it suffices to show if $\lambda >M$ or $\lambda<m$, then $\lambda$ is not an eigenvalue.
  Accordingly suppose $\lambda>M$.
 In this case the measure of the set $S=\{z\in\mathbb T: \phi(z)>\lambda\}$ is less than $\frac{\pi}{2}$.
 On the other hand, if $\lambda$ is an eigenvalue, then there is a non-zero $c$ and outer function $h\in \HT$ such that
\[
  (\phi-\lambda)\norm{h}^2 = c\chi+(c\chi)^*
\]
 But then the measure of the set $S$ is $\frac{\pi}{2}$, a contradiction.
Which proves the corollary when $\lambda>M$.
 The proof of the case $\lambda<m$ proceeds analogously.
 It now follows that set of eigenvalues contains $(m,M)$ and is contained in $[m,M]$ and the proof of the corollary is complete.

To prove corollary \ref{cor:lambdatov} use \eqref{eq:NeilMagic} and the fact that $f_\lambda$ is outer to see that
\begin{align*}
f_{\lambda}(z)=&\exp\left(\int_{\MBBT}H(z,\cdot)\log\left(\frac{c\chi+(c\chi)^*}{\phi-\lambda}\right)^{1/2}\dmu\right)\\
 =&\exp\left(\int_{\MBBT}H(z,\cdot)\log\norm{c\chi+(c\chi)^*}^{1/2}\dmu\right)\exp\left(\int_{\MBBT}H(z,\cdot)\log\norm{\phi-\lambda}^{-1/2}\dmu\right)\\
 =&h_c(z) g_{\lambda}(z).
\end{align*}
Thus $f_{\lambda}(0)={h}_c(0) {g}_{\lambda}(0)$ and
$f^{\prime}_{\lambda}(0)= {h}_c(0) {g^\prime}_{\lambda}(0)+ {h^\prime}_c(0) {g}_{\lambda}(0)$
and the conclusion follows.

To prove Corollary \ref{cor:notAC} we will first show that the maps
\[
\begin{aligned}
\lambda &\mapsto g_{\lambda}(0) \text{ and}\\
\lambda &\mapsto g_{\lambda}^{\prime}(0)
\end{aligned}
\]
are twice differentiable with respect to $\lambda$ on $(m,M)$.
Those questions boil down to checking if
\begin{align*}
\lambda &\mapsto \int_\MBBT H(0,\cdot)\log\norm{\phi-\lambda}\dmu \text{ and}\\
\lambda &\mapsto \int_\MBBT H^{\prime}(0,\cdot)\log\norm{\phi-\lambda}\dmu
\end{align*}
are twice differentiable with respect to $\lambda$ on $(m,M)$.
 For a given  $\lambda_0\in(m,M)$ there is a  $\delta>0$ such that
 $\phi-\lambda$ is essentially bounded above and away from zero
  for $|\lambda -\lambda_0|<\delta.$ 
 It follows that for such $\lambda$, the functions $H(0,t)\log(|\phi(t)-\lambda|)$
 and $H^\prime(0)\log(|\phi(t)-\lambda|)$ as well as $(\phi(t)-\lambda)^{-1}$ 
 are all bounded above and below.
 Thus a standard application of the dominated convergence theorem establishes the desired differentiability. 
A similar argument shows that in fact both functions are infinitely differentiable. 
Since $\Phi_1\circ\Lambda$ is twice differentiable on $(m,M)$ it is locally Lipschitz.
Because $(m,M)$ can be written as a countable union of intervals with $\Phi_1\circ\Lambda$ Lipschitz on each interval the Hausdorff dimension of $\Phi_1\circ\Lambda((m,M))$ is at most $1$.
So $\Lambda((m,M))$ cannot be all of $\PCT\setminus\{[0,1]\}$ since $\Phi_1$ is injective on its range.
Let $\MCL=\left\{\MCV\ |\ \MCV \text{ is a nonzero proper subspace of }\MBBC+\MBBC z\right\}$.
Finally for each nonzero proper $\MCV$ choose a $f\in\HT_\MCV$ with $f(0)$ and $f^{\prime}(0)$ not both zero and let $\tau:\MCL\to\PCT$ be defined by map $\tau:\MCV\mapsto [f(0),f^{\prime}(0)]$.
The map $\tau$ is a bijection between $\MCL$ and $\PCT$, thus if $\tau(\MCV)\not\in\Lambda((m,M))$, then by \ref{cor:intNeil} and \ref{cor:lambdatov} we have that $T^\MCV_\phi$ has no eigenvalues.
Additionally if $m$ (resp. $M$) is an eigenvalue of $\phi$ relative to $\MCA$ then we need to add the condition that $\tau(\MCV)\neq\Lambda(m)$ (resp. $\tau(\MCV)\neq\Lambda(M)$) for the above conclusion to hold.

\section{Bundle shifts}
\label{sec:Boundaries}
  It is natural to ask what distinguishes the families of representations $\{\pi_\alpha : 0\le \alpha<1\}$ and $\{\pi_\MCV: \MCV \subset \MBBC \oplus \MBBC z\}$  of the algebras $\aA$ and $\MCA$ as multiplication operators on the spaces $\{ \HT_\alpha : 0\le \alpha <1\}$ and $\{\HT_\MCV : \MCV \subset \MBBC \oplus \MBBC z\}$ respectively.

  For the annulus, an answer is that Sarason recognized that the collection of representations $(\pi_\alpha,\HT_\alpha)$ played the same role on the annulus as the single representation determined by the shift operator $S$ given by
\begin{equation}
 \label{eq:repS}
  \AD \ni f\to f(S)
\end{equation}
  plays for the disc algebra $\AD$.
  For $\cA$ the representations $\pi_\MCV$ generate a family of positivity conditions sufficient for Pick interpolation in $\cA$ \cite{DPRS}.
  Likely it is a minimal set of conditions too.
  Corollary \ref{cor:neilbundleshifts} below can be interpreted as saying that the representations $(\pi_\MCV,\HT_\MCV; \MCV\subset \MBBC \oplus \MBBC z)$ should play the role of the rank one bundle shifts for the Neil algebra $\cA$.
In a dual direction \cite{DP} found a minimal set of test functions for $\MCA$. For similar results on multiply connected domains see \cite{BH1} and \cite{BH2}.

  For positive integers $n$, the algebra $M_n(B(H))$ of $n\times n$ matrices with entries from $B(H)$
  is naturally identified with $B(\mathbb C^n\otimes  H)$, the operators on the Hilbert space $\mathbb C^n\otimes H \equiv \oplus_1^n H$.
  In particular, it is then natural to give an element $X\in M_n(B(H))$ the norm $\|X\|_n$ it inherits as an operator
  on $\mathbb C^n\otimes H$.  If $A$ is a  subalgebra  of $B(H)$, then the norms $\|\cdot\|_n$ of course restrict to
  $M_n(A)$, the $n\times n$ matrices with entries from $A,$ and $A$ together with this sequence of norms is a concrete \df{operator algebra}.

    Turning to the disc algebra, an element $F\in M_n(\AD)$ takes the form $F=(F_{j,k})_{j,k=1}^n$ for $F_{j,k}\in \AD$.
  In particular, $M_n(\AD)$ is itself an algebra and comes naturally equipped with the norm,
\[
 \|F\|_n =\sup\{\|F(z)\|: z\in \mathbb D\}.
\]
  where $\|F(z)\|$ is the usual \df{operator norm} of the $n\times n$ matrix $F(z)$.
   The representation $\pi:\mathbb A(\mathbb D) \mapsto B(H^2)$ given by $\pi(a) = M_a$ extends naturally to $M_n(\AD)$ as $1_n\otimes \pi:M_n(\AD)\to B(\oplus^n H^2)$ by
\[
 1_n\otimes \pi (F) = \begin{pmatrix} \pi(F_{j,k})\end{pmatrix}_{j,k=1}^n.
\]
  Moreover, the maps $1_n\otimes \pi$ are isometric.
  Thus the algebra $\AD$ can be viewed as an operator algebra by identifying  $\AD$ together with the sequence of norms $(\|\cdot\|_n)$ with its image in $B(H^2)$ under the mappings $1_n\otimes \pi$.
	Of course, any subalgebra of $\AD$ can also then be viewed as an operator algebra by inclusion.

   Given an operator algebra $A$, a representation $\rho:A\to B(H)$ is \df{completely contractive} if $\|1_n\otimes\rho(F) \|_n \le \|F\|_n$ for each $n$ and $F\in M_n(A)$.
   If $A$ and $B$ are unital, then $\rho$ is a \df{unital representation} if $\rho(1)=1$.
   The representation $\rho$ on $B(H)$ is \df{pure} if
\[
   \bigcap_{a\in A} \rho(a) H = (0).
\]
  It is immediate that the representations of $\AD$ determined by $S$ as well as the representations  $\pi_\alpha$ of $\aA$ and $\pi_\MCV$ of $\cA$ are unital, completely contractive, and pure.

  Following Agler \cite{agler}, a completely contractive (unital) representation $\pi:\cA \to B(H)$ of $\cA$ on the Hilbert space $H$ is \df{extremal} if whenever $\rho:\cA\to B(K)$ is a completely contractive representation on the Hilbert space $K$ and $V:H\to K$ is an isometry such that
\[
   \pi(a) = V^* \rho(a)V
\]
 then in fact
\[
  V\pi(a) = \rho(a)V.
\]
  Given a Hilbert space $\MCN$, let $\HT_\MCN$ denote the Hilbert Hardy space of $\MCN$-valued analytic functions on the disc
 with square integrable boundary values. Associated to $\MCN$ is the representation $\rho:\AD\to B(\HT_\MCN)$ defined
 by
\[
 \rho(\varphi)f = \varphi f.
\]
 Thus, $\rho(\varphi)$ is multiplication by the scalar-valued $\varphi$ on the vector-valued $\HT$ space $\HT_\MCN$.
  Of course, $\HT_\MCN$ is naturally identified with $\MCN\otimes \HT$ and the representation $\rho$ is then the identity on $\MCN$ tensored with the representation of $\AD$ in Equation \eqref{eq:repS}.
   If $\rho$ is a  completely contractive unital pure extremal representation of $\AD,$  then there exists a Hilbert space $\MCN$ so that, up to unitary equivalence,   $\rho:\AD\to B(\HT_{\MCN})$ is
  given by $\rho(\varphi) f =\varphi f$.

  For $\MCA$ it turns out that the subspaces of $\HT_\MCN$ identified in \cite{mrinal}  give
  rise to the extremal representations. Indeed,
 given a Hilbert space $\MCN$ and a subspace $\MCV$ of the subspace $\MCN\oplus z\MCN$ of $\HT_\MCN$,
 the mapping $\pi_\MCV: \MCA \to B(\HT_\MCN \ominus \MCV)$ defined by
\[
  \pi_\MCV(a) f = a f,
\]
 is easily seen to be a unital pure completely contractive representation of $\MCA.$

\begin{theorem}
\label{thm:ExtremalChar}
 The representations $\pi_\MCV$ are unital pure completely contractive  extremal representations.
Moreover, if $\nu$ is a unital pure extremal completely contractive representation of $\MCA,$ then $\nu$ is unitarily equivalent to $\pi_{\MCV}$ for some Hilbert space $\MCN$ and $\MCV\subseteq \MCN \oplus \MCN z$.
\end{theorem}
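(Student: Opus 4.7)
The plan is to prove the two directions separately. The forward direction (that each $\pi_\MCV$ is unital, pure, completely contractive, and extremal) has three quick pieces and one substantial piece. The routine pieces: $\pi_\MCV(1) = I$; complete contractivity follows because $\pi_\MCV$ is the restriction of the $\MCA$-action by multiplication on $H^2_\MCN$ to the $\MCA$-invariant subspace $H^2_\MCN \ominus \MCV$ (one checks that $\MCA\cdot(H^2_\MCN \ominus \MCV)\subset H^2_\MCN \ominus \MCV$ using that $a\in\MCA$ has vanishing $z$-coefficient, so multiplication by $a$ preserves the condition of being orthogonal to $\MCV\subset\MCN\oplus\MCN z$), and the ambient vector-valued multiplication representation is completely contractive; purity follows from $\bigcap_{n\geq 2}\chi^n H^2_\MCN = \{0\}$. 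The substantial piece is extremality.

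For extremality of $\pi_\MCV$, the key observation is that $\pi_\MCV(\chi^2)$ and $\pi_\MCV(\chi^3)$ are each isometries, since $M_{\chi^2}$ and $M_{\chi^3}$ are isometries on $H^2_\MCN$ and $H^2_\MCN \ominus \MCV$ is invariant under each. Given a unital completely contractive $\rho:\MCA\to B(K)$ and an isometric $V$ with $V^*\rho(a)V = \pi_\MCV(a)$, a standard Schwarz-type argument (whenever $V^*SV$ is an isometry with $S$ a contraction and $V$ an isometry, then $SV = V(V^*SV)$, via $\|Vf\|=\|V^*SVf\|\leq\|SVf\|\leq\|Vf\|$) yields $\rho(\chi^k)V = V\pi_\MCV(\chi^k)$ for $k=2,3$. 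Since polynomials in $\chi^2$ and $\chi^3$ are norm-dense in $\MCA$, the intertwining $\rho(a)V = V\pi_\MCV(a)$ extends by continuity to all of $\MCA$, giving extremality.

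For the converse, given $\nu:\MCA\to B(H)$ unital, pure, extremal, completely contractive, the plan is to dilate $\nu$ to multiplication by $\MCA$ on a vector-valued $H^2_\MCN$. The Neil-algebra dilation theory (from \cite{mrinal} and \cite{DPRS}) produces a Hilbert space $\MCN$ and an isometric embedding $V:H\to H^2_\MCN$ satisfying $V^*\rho(a)V = \nu(a)$, with $\rho$ the multiplication representation. Extremality of $\nu$ then upgrades this compression to a restriction: $VH$ is $\MCA$-invariant in $H^2_\MCN$, so $\MCV := H^2_\MCN \ominus VH$ is the orthogonal complement of an $\MCA$-invariant subspace. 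Choosing $\MCN$ to be a minimal wandering subspace for the dilation and exploiting purity of $\nu$, one then shows $\MCV \subset \MCN \oplus \MCN z$: any $v\in\MCV$ with nontrivial Taylor coefficient at $z^k$ for some $k\geq 2$ would, via the $\MCA$-invariance of $VH$ combined with $\chi^2 v, \chi^3 v \in VH^\perp$ and minimality of $\MCN$, force a contradiction.

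The main obstacle is the reverse direction. The forward direction reduces cleanly to a two-generator Schwarz calculation once one observes that $\pi_\MCV(\chi^2)$ and $\pi_\MCV(\chi^3)$ are isometries and exploits that $\chi^2,\chi^3$ generate $\MCA$. The converse, however, demands two deeper ingredients: first, the multiplication-on-$H^2_\MCN$ dilation for arbitrary pure completely contractive representations of $\MCA$, a Neil-algebra-specific result not immediate from classical Sz.-Nagy theory and which rests on the dilation machinery of the cited references; second, the geometric identification $\MCV \subset \MCN \oplus \MCN z$, which encodes the defining feature $f'(0)=0$ of $\MCA$ in the structure of the dilation and whose extraction will require a careful minimality-of-wandering-subspace argument.
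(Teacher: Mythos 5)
Your forward direction is correct, and it is in fact a more economical route than the paper's: you get the intertwining $\rho(\chi^k)V=V\pi_\MCV(\chi^k)$ for $k=2,3$ directly from the equality case of $\Norm{V^*\rho(\chi^k)Vf}\le\Norm{\rho(\chi^k)Vf}\le\Norm{Vf}$ (which simultaneously shows $\rho(\chi^k)VH\subset VH$), and then multiplicativity plus density of polynomials with no linear term finishes it. The paper reaches the same conclusion through two lemmas — a Sarason-type semi-invariance lemma and a $3\times 3$ operator-matrix argument showing that a semi-invariant subspace on which the compression of a contraction is isometric is actually invariant — packaged as a general proposition about generators of norm one acting isometrically. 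Both arguments hinge on the same observation that $1,\chi^2,\chi^3$ generate $\MCA$ and act isometrically on $\HT_\MCV$; yours is shorter, the paper's is reusable.

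The converse, however, has a genuine gap: the entire weight of the argument rests on the assertion that ``Neil-algebra dilation theory (from \cite{mrinal} and \cite{DPRS}) produces'' an isometry $V:H\to \HT_\MCN$ with $V^*\rho(a)V=\nu(a)$, $\rho$ being multiplication. No such dilation theorem is in those references: \cite{mrinal} classifies the $\MCA$-invariant subspaces of $\HT$ and proves a Pick theorem, and \cite{DPRS} proves constrained Nevanlinna--Pick interpolation; neither dilates an abstract unital pure completely contractive representation of $\MCA$ to multiplication on a vector-valued $\HT$ space. Producing exactly this structure is the crux of the theorem, and the paper supplies it concretely: by \cite[Corollary 7.7]{P} one first dilates $\nu$ to a representation $\rho$ of the C*-algebra (so only a $C(\MBBT)$-type dilation, which is what general theory gives), extremality then yields $V\nu(a)=\rho(a)V$ and hence that $VH$ is invariant, one sets $E=VH\vee\rho(\chi)VH$, checks $\rho(\chi)|_E$ is an isometry (since $\rho$ is a $*$-representation) which is \emph{pure} because $\rho(\chi^2)E\subset VH$ and $\nu$ is pure, and only then does the Wold identification give $E\cong\HT_\MCN$. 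Your proposal also misplaces where purity enters: it is needed precisely to exclude a unitary/residual summand of the dilation (i.e., to get a pure shift), not for the final containment. Once you have $\HT_\MCN=\bigvee_{n\ge 0}\chi^n\,WVH$ with $WVH$ invariant for $\MCA$, the inclusion $\chi^2\HT_\MCN\subset WVH$ is immediate (as $\chi^{n+2}\in\MCA$), so $\HT_\MCN\ominus WVH\subset\HT_\MCN\ominus\chi^2\HT_\MCN=\MCN\oplus\MCN\chi$; no ``Taylor coefficient contradiction'' or minimal-wandering-subspace analysis is required, and indeed without such a minimality normalization the containment you claim is simply false (an $\MCA$-invariant subspace such as $\theta\HT$ for a Blaschke factor $\theta$ has orthogonal complement far outside $\MBBC\oplus\MBBC\chi$). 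As written, the converse assumes the theorem's hard content rather than proving it.
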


Finally we will say a representation has rank one if there does not exist a nontrivial orthogonal pair of subspaces invariant for the representation. 

\begin{corollary}
 \label{cor:neilbundleshifts}
    The representations $\pi_\MCV$ for $\MCV \subset \MBBC \oplus \MBBC z$ have rank one.
    Moreover if the representation  $\pi$ is a unital pure extremal completely contractive rank one representation of $\MCA$, then there is a $\MCV\subset \MBBC \oplus \MBBC z$ such that $\pi$ is unitarily equivalent to $\pi_\MCV$.
\end{corollary}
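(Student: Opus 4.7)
My plan has two parts, corresponding to the two claims of the corollary.

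For the first claim (that every $\pi_\MCV$ with $\MCV\subset\MBBC\oplus\MBBC z$ is rank one), I would argue by contradiction. Suppose $H_1,H_2$ are nonzero orthogonal subspaces of $\HT_\MCV$, each invariant under $\pi_\MCV(\MCA)$, and fix nonzero $f\in H_1$ and $g\in H_2$. Because $af\in H_1\perp g$ and $f\perp ag\in H_2$ for every $a\in\MCA$, the function $f\overline{g}\in\text{L}^1(\MBBT)$ annihilates $\MCA\oplus\MCA^*$ when integrated against $\dmu$. The proposition proved in Section \ref{sec:Neil} then produces $c\in\MBBC$ with $f\overline{g}=c\chi+(c\chi)^*$ on $\MBBT$. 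A nonzero element of $\HT$ is nonzero almost everywhere on $\MBBT$, so $f\overline{g}\neq 0$ a.e., and hence $c\neq 0$. Now apply the same reasoning to the pair $z^2f\in H_1$ (valid since $z^2\in\MCA$) and $g\in H_2$: it would give $z^2 f\overline{g}=c'\chi+(c'\chi)^*$ for some $c'\in\MBBC$. But $z^2 f\overline{g}=z^2(c\chi+(c\chi)^*)=cz^3+c^*z$ has a nonzero Fourier coefficient at $z^3$, contradicting the required form. So no such $H_1,H_2$ exist.

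For the converse, suppose $\pi$ is a unital pure extremal completely contractive rank one representation of $\MCA$. By Theorem \ref{thm:ExtremalChar} there is a Hilbert space $\MCN$ and $\MCV\subseteq\MCN\oplus\MCN z$ so that $\pi$ is unitarily equivalent to $\pi_\MCV$ acting on $\HT_\MCN\ominus\MCV$, and it remains only to show $\dim\MCN=1$. Assume instead that $\dim\MCN\ge 2$ and pick a nontrivial orthogonal decomposition $\MCN=\MCN_1\oplus\MCN_2$. A direct Fourier-coefficient calculation shows that $z^2\HT_\MCN$ is orthogonal to every $v_0+v_1 z$ with $v_0,v_1\in\MCN$, and hence $z^2\HT_\MCN\subset\HT_\MCN\ominus\MCV=\HT_\MCV$. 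This subspace splits orthogonally as $z^2\HT_{\MCN_1}\oplus z^2\HT_{\MCN_2}$; both summands are nonzero, and each is invariant under scalar multiplication by any $a\in\MCA$ since that multiplication preserves each $\HT_{\MCN_i}$ and commutes with $z^2$. This contradicts rank one, so $\dim\MCN=1$ and $\MCV\subset\MBBC\oplus\MBBC z$.

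The step I expect to be the most delicate is the first one, in particular verifying that the proposition from Section \ref{sec:Neil} applies to the density $f\overline{g}$ and that its conclusion can be sharpened to $c\neq 0$ using the fact that nonzero $\HT$ functions vanish only on null sets. Once $f\overline{g}=c\chi+(c\chi)^*$ with $c\neq 0$ is in hand, the boost-by-$z^2$ trick is immediate, and the converse reduces to the transparent orthogonal splitting argument described above.
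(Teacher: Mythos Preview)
Your converse argument (reducing to $\dim\MCN=1$ via the orthogonal invariant pair $z^2\HT_{\MCN_1}$, $z^2\HT_{\MCN_2}$) is exactly the paper's proof.

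For the first claim your strategy is sound, but there is one imprecision, and the paper's route is more direct. The proposition in Section~\ref{sec:Neil} concludes that $d\nu/d\mu=c\chi+(c\chi)^*$, a \emph{real-valued} function; in the paper it is only applied to the real density $\phi|g|^2$, where the Fourier coefficient at $-1$ is automatically the conjugate of that at $1$. Your density $f\overline{g}$ is in general complex, so what the proof of that proposition actually gives you is $f\overline{g}=a\chi+b\overline{\chi}$ for some $a,b\in\MBBC$ (Fourier support in $\{\pm 1\}$), with no relation between $a$ and $b$. Your $z^2$-boost then yields $z^2f\overline{g}=a\chi^3+b\chi$, and Fourier support in $\{\pm1\}$ forces $a=0$; but you still have $f\overline{g}=b\overline{\chi}$. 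To finish you must also boost on the other side, replacing $g$ by $z^2g$: then $\overline{\chi}^{\,2}f\overline{g}=b\overline{\chi}^{\,3}$ must again have Fourier support in $\{\pm1\}$, forcing $b=0$, whence $f\overline{g}=0$ a.e.\ and the contradiction with $f,g$ nonzero a.e.\ follows.

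The paper bypasses this two-step reduction by using invariance on both sides simultaneously: from $\langle z^m f,\,z^n g\rangle=0$ for all $m,n\in\{0,2,3,\dots\}$ one obtains $\int_{\MBBT} f\overline{g}\,\chi^{m-n}\,d\mu=0$, and since $m-n$ ranges over all of $\MBBZ$ every Fourier coefficient of $f\overline{g}$ vanishes at once, giving $f\overline{g}=0$ directly. Your approach has the virtue of reusing the annihilator proposition, but it needs the small patch above to go through.
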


The remainder of the section is organized as follows. Subsection \ref{subsec:proofExtremal} proves that the representations $\pi_\MCV$ are extremal. Subsection \ref{subsec:proofExtremalChar} contains the proof of the remainder of Theorem \ref{thm:ExtremalChar}. The corollary is proved in \ref{subsec:proofCorExtremal}.

\subsection{The Extremal Representations of $\MCA$}
\label{subsec:proofExtremal}

While it is easy to see that the representations $\pi_\MCV:\MCA \to B(\HT_\MCV)$ are unital, pure, and completely contractive showing that they are also extremal is a bit harder. To prove they are extremal we will first prove a proposition which gives us an easy to verify sufficient condition for a representation to be extremal.

The first lemma we need is a well known  generalization of Sarason's Lemma \cite{Sarason-ss}.
 Given a representation $\rho:A\to B(K)$, a subspace $\MCM$ of $K$ is \df{invariant} for $\rho$
  if $\rho(a)\MCM \subset \MCM$ for all $a\in A$.  A subspace $H$ of $K$ is \df{semi-invariant}
  for $\rho$ if there exist invariant subspaces $\MCM$ and $\MCN$ such that $H= \MCN\ominus \MCM$.
  Note that, letting $V:H\to K$ denote the inclusion, the mapping $A\ni a\mapsto V^* \rho(a) V$ is also a representation of $A$.

 \begin{lemma}
 \label{lem:Sarason}
Let $\nu:A\to B(H)$ be a representation of $A$ in $B(H)$ and $\rho:A\to B(K)$ be a representation of $A$ in $B(K)$ and $V:H\to K$ an isometry.
If $\nu(a)=V^*\rho(a)V$ for all $a\in A$, then $VH$ is a semi-invariant for $\rho$.
\end{lemma}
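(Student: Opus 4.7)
The plan is to run the classical Sarason compression argument, adapted to the (possibly non-self-adjoint) algebra $A$. First I would identify $H$ with $VH\subset K$, treating $V$ as the isometric inclusion, and let $P=VV^*$ denote the orthogonal projection of $K$ onto $VH$. The central structural fact I need to extract from the hypothesis is that, because $\nu$ is multiplicative, for every $a,b\in A$ and $h\in H$,
\[
\nu(ab)h=V^*\rho(ab)Vh=V^*\rho(a)\rho(b)Vh,
\]
while simultaneously $\nu(ab)h=\nu(a)\nu(b)h=V^*\rho(a)VV^*\rho(b)Vh$. Subtracting yields the key identity
\[
V^*\rho(a)(I-P)\rho(b)V=0 \quad \text{for all } a,b\in A.
\]

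Next I would construct the flanking invariant subspaces. Let
\[
\MCN=\overline{\operatorname{span}}\{\rho(a)Vh:a\in A,\ h\in H\},
\]
which is by construction the smallest closed $\rho$-invariant subspace of $K$ containing $VH$ (note $VH\subset\MCN$ since $A$ is unital). Set $\MCM=\MCN\ominus VH$. Clearly $VH=\MCN\ominus\MCM$, so semi-invariance of $VH$ reduces to showing $\MCM$ is $\rho$-invariant. Since $\MCM\subset\MCN$ and $\MCN$ is invariant, $\rho(a)m\in\MCN$ for every $m\in\MCM$, so the task reduces to showing $\rho(a)m\perp VH$, i.e.\ $V^*\rho(a)m=0$.

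For this, approximate $m\in\MCM$ by finite sums $s_n=\sum_i\rho(b_i^{(n)})Vh_i^{(n)}\to m$, with the constraint $V^*m=0$. Using the compression identity, I would compute
\[
V^*\rho(a)s_n=\sum_i V^*\rho(a)\rho(b_i^{(n)})Vh_i^{(n)}=\sum_i\nu(ab_i^{(n)})h_i^{(n)}=\nu(a)\sum_i V^*\rho(b_i^{(n)})Vh_i^{(n)}=\nu(a)V^*s_n,
\]
where the middle step uses $\nu(ab)=\nu(a)\nu(b)$. Passing to the limit gives $V^*\rho(a)m=\nu(a)V^*m=0$, as required. Combined with the previous paragraph, this proves $\MCM$ is $\rho$-invariant, and hence $VH$ is semi-invariant.

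The only real subtlety is the interchange of limit and the finite-sum manipulation in the last display; the main obstacle is ensuring that the identity $V^*\rho(a)s_n=\nu(a)V^*s_n$ — which depends on the multiplicativity of $\nu$ applied to arbitrary products $ab$ in $A$ — is what converts the set-theoretic description of $\MCN$ into the orthogonality needed for $\MCM$ to be invariant. Once that computation is in hand, the rest is routine bookkeeping with closures and orthogonal projections.
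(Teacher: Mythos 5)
Your proof is correct and follows essentially the same route as the paper: you build the cyclic invariant subspace $\MCN=\bigvee_{a\in A}\rho(a)VH$, set $\MCM=\MCN\ominus VH$, and use multiplicativity of $\nu$ to get the intertwining $V^*\rho(a)=\nu(a)V^*$ on $\MCN$, which forces $\rho(a)m\perp VH$ for $m\in\MCM$. The only cosmetic differences are your preliminary identity $V^*\rho(a)(I-P)\rho(b)V=0$ (which you never actually need beyond its equivalent use inside the sum computation) and your explicit limit-passing, which the paper handles implicitly by density and continuity.
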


\begin{proof}
Let
  \[
    \MCN\coloneqq\bigvee_{a\in A}\rho(a)VH,
  \]
 the smallest (closed) subspace of $K$ containing all of the spaces $\rho(a)VH$.
Notice that the elements of the form
  \[
    \sum_{i=0}^{N}\rho(a_i)Vh_i \text{ where }\{h_i\}\subset H,\ \{a_i\}\subset A,\text{ and } N>0
  \]
form a dense subset of $\MCN$.
Since $\rho$ is a representation, for any $a\in A$, $\{h_i\}\subset H$, $\{a_i\}\subset A$,and $N>0$ we have that
  \[
    \rho(a)\left(\sum_{i=0}^{N}\rho(a_i)Vh_i\right)=\sum_{i=0}^{N}\rho(a\cdot a_i)Vh_i\in\MCN
  \]
and thus $\MCN$ is $\rho(a)$ invariant.
To complete this direction of the proof we only need to show that $\MCM\coloneqq\MCN\ominus VH$ is also invariant for $\rho(a)$.
Notice that $\nu(a)=V^*\rho(a)V$ implies
  \begin{align*}
    V^*\rho(a)\left(\sum_{i=0}^{N}\rho(a_i)Vh_i\right)&=\sum_{i=0}^{N}V^*\rho(a \cdot a_i)Vh_i\\
    &= \sum_{i=0}^N \nu(a a_j)h_j \\
    &=\sum_{i=0}^{N}\nu(a)\nu(a_i)h_i\\
    &=\nu(a)\sum_{i=0}^{N}V^*\rho(a_i)Vh_i.
  \end{align*}
Thus $V^*\rho(a)|_{\MCN}=\nu(a)V^*|_{\MCN}$.
If  $m\in\MCM,$ then $m\in\MCN$ and by the Fredholm alternative  $V^*m=0$.
Thus, if  $a\in A,$ then $V^*\rho(a)m=\nu(a)V^*m=0$, which, again by the Fredholm alternative, implies $\rho(a)m\in\MCM$.
\end{proof}

The next lemma allows us to improve semi-invariance to invariance if $\nu(a)$ is an isometry and $\Norm{\rho(a)}=1$.
\begin{lemma}
\label{lem:invariant}
If $H\subset K$ is a semi-invariant subspace for a contraction $T$ and $S\coloneqq P_H T|_H$ is an isometry, then $H$ is an invariant subspace for $T$.
\end{lemma}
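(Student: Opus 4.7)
My plan is to unpack the definition of semi-invariance and then squeeze the unwanted component out using the two norm constraints.

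First I would fix invariant subspaces $\MCM \subset \MCN \subset K$ for $T$ with $H = \MCN \ominus \MCM$, which is what semi-invariance of $H$ gives us. The point is that while the pair $(\MCM,\MCN)$ encodes $H$, only $\MCN$-invariance will be used to locate $Th$, and $\MCM$-invariance will then be irrelevant; all the work is pushed onto the norm inequalities.

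Next, for an arbitrary $h\in H$, since $h\in\MCN$ and $\MCN$ is invariant, $Th\in\MCN$. Decomposing $\MCN = H\oplus\MCM$ orthogonally, I would write
\[
 Th = Sh + m, \qquad Sh = P_H Th\in H,\quad m\in \MCM.
\]
Because $Sh$ and $m$ are orthogonal, the Pythagorean theorem gives $\|Th\|^2 = \|Sh\|^2 + \|m\|^2$.

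The final step is the pinch: $T$ is a contraction so $\|Th\|\le \|h\|$, while $S$ is assumed to be an isometry so $\|Sh\| = \|h\|$. Combining these,
\[
 \|h\|^2 \;\ge\; \|Th\|^2 \;=\; \|Sh\|^2 + \|m\|^2 \;=\; \|h\|^2 + \|m\|^2,
\]
forcing $\|m\|=0$, whence $Th = Sh \in H$. Since $h\in H$ was arbitrary, $H$ is $T$-invariant.

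There is essentially no obstacle here; the only subtle point is making sure the decomposition $Th = Sh + m$ is justified before invoking Pythagoras, which is why I explicitly need $Th\in\MCN$ (hence the invariance of $\MCN$, not just semi-invariance of $H$). The isometry hypothesis on $S$ is used in exactly one place, to saturate the contraction inequality and thereby eliminate the $\MCM$-component.
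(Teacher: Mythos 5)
Your proof is correct and is essentially the paper's argument in coordinate-free form: the paper writes $T$ as a $3\times 3$ upper-triangular block matrix with respect to $\MCM\oplus H\oplus\MCN^{\perp}$ and applies $I-T^*T\ge 0$ to vectors $(0,h,0)$, which amounts exactly to your decomposition $Th=Sh+m$ followed by Pythagoras, the contraction bound, and the isometry identity to force $m=0$. Your remark that only the invariance of $\MCN$ is really used (not that of $\MCM$) is accurate and is implicit in the paper's computation as well.
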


\begin{proof}
Since $H$ is semi-invariant for $T$ and $S=P_H T|_H$ we know that there exists two $T$ invariant spaces $\MCN$ and $\MCM$ such that $\MCN=\MCN\oplus H$ and
\[
	T=
	\begin{bmatrix}
	A & B & C \\
	0 & S & F \\
	0 & 0 & K
	\end{bmatrix}
\]
Where $A:\MCM \to \MCM$, $B:\MCM \to H$, $C:\MCM \to \MCN^{\perp}$, $F:H \to \MCN^{\perp}$, and
 $K:\MCN^{\perp} \to \MCN^{\perp}$.
Since $T$ is a contraction we have $I-T^*T \geq 0$ thus, for all $h\in H$, 
\[
	0\leq \innerP{(I-T^*T)
	\begin{bmatrix}
	0 \\
	h \\
	0
	\end{bmatrix}
	}{
	\begin{bmatrix}
	0 \\
	h \\
	0
	\end{bmatrix}
	}
	=\innerP{\begin{bmatrix}
	-A^*Bh \\
	\left(I-B^*B-S^*S\right)h \\
	-\left(C^*B+F^*S\right)h
	\end{bmatrix}
	}{
	\begin{bmatrix}
	0 \\
	h \\
	0
	\end{bmatrix}
	}
	=\Norm{h}^2-\Norm{Bh}^2-\Norm{Sh}^2=-\Norm{Bh}^2
\]
so $Bh=0$ for all $h \in H$.
Thus $H$ is a $T$ invariant subspace.
\end{proof}

Combining Lemmas \ref{lem:Sarason} and \ref{lem:invariant} yields the following proposition.
\begin{proposition}
\label{prop:extremal}
Let $\nu:A\to B(H)$ be a contractive representation of $A$ in $B(H)$ and $\{a_i\}_{i\in J}\subset A$ be a set that generates a dense subalgebra of $A$ with $\Norm{a_i}=1$ for all $i\in J$.
If $\nu(a_i)$ is an isometry for all $i\in J$, then $\nu$ is extremal.
\end{proposition}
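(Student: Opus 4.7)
The plan is to reduce to the two preceding lemmas via the following reformulation: given a completely contractive representation $\rho:A\to B(K)$ and an isometry $V:H\to K$ with $\nu(a)=V^*\rho(a)V$ for every $a\in A$, the conclusion $V\nu(a)=\rho(a)V$ is equivalent to $VH$ being invariant for every $\rho(a)$. Indeed, once $\rho(a)(VH)\subset VH$, one has $\rho(a)Vh = VV^*\rho(a)Vh = V\nu(a)h$ automatically, since $VV^*$ is the projection onto $VH$. So the entire problem becomes: prove that $VH$ is invariant under $\rho(a)$ for every $a\in A$.

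First I would apply Lemma \ref{lem:Sarason} to obtain that $VH$ is semi-invariant for every $\rho(a)$, $a\in A$. Then, separately for each generator $a_i$, I would upgrade semi-invariance to full invariance via Lemma \ref{lem:invariant}. The hypotheses of that lemma match our setup: the operator $\rho(a_i)$ is a contraction (since $\rho$ is contractive and $\Norm{a_i}=1$), and under the unitary identification $V:H\to VH$ the compression $P_{VH}\rho(a_i)|_{VH}$ is exactly $\nu(a_i)$, which is an isometry by hypothesis. Lemma \ref{lem:invariant} thus delivers $\rho(a_i)(VH)\subset VH$ for every $i\in J$.

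The final step is to promote invariance under the generators $a_i$ to invariance under all of $A$. The collection $\mathcal{I}=\{T\in B(K):T(VH)\subset VH\}$ is a norm-closed unital subalgebra of $B(K)$, and $\rho$ is norm-continuous, so $\rho^{-1}(\mathcal{I})$ is a norm-closed subalgebra of $A$. By hypothesis it contains each $a_i$, hence contains the (norm closure of the) subalgebra they generate, which is all of $A$. Therefore $VH$ is $\rho(a)$-invariant for every $a\in A$, and by the opening reduction we are done.

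The only technical wrinkle I anticipate is correctly matching Lemma \ref{lem:invariant} to the present setting: there the semi-invariant subspace is taken to sit literally inside the ambient space, whereas here $H$ sits inside $K$ only via the isometry $V$. The identification is harmless, but the statement ``$S:=P_{VH}\rho(a_i)|_{VH}$ is an isometry'' versus ``$\nu(a_i)$ is an isometry'' requires invoking the unitary $V:H\to VH$ once to translate between the two. Aside from this bookkeeping, the argument is a clean chain of Lemma \ref{lem:Sarason}, Lemma \ref{lem:invariant}, and the continuity/density step.
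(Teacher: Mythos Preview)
Your proposal is correct and follows essentially the same route as the paper: apply Lemma \ref{lem:Sarason} to get semi-invariance of $VH$, use Lemma \ref{lem:invariant} at each generator $a_i$ (using $\Norm{a_i}=1$ so $\rho(a_i)$ is a contraction and the compression is the isometry $\nu(a_i)$), then pass to all of $A$ by multiplicativity and density, and finish with $V\nu(a)=VV^*\rho(a)V=\rho(a)V$. Your treatment of the density/continuity step is in fact more explicit than the paper's, which simply asserts the passage from the generated algebra to all of $A$.
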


\begin{proof}
Let $\rho:A\to B(K)$ be a contractive representation of $A$ in $B(K)$ and $V:H\to K$ be an isometry such that $\nu(a)=V^*\rho(a)V$ for all $a\in A$.
By lemma \ref{lem:Sarason},  $VH$ is a semi-invariant subspace of $K$ for $\rho$.
By lemma \ref{lem:invariant},  $VH$ is invariant for $\rho(a_i)$ for each $i \in J$.
Because $\rho$ is a representation we have that $VH$ is invariant for $\rho(a)$ for each $a$ in the algebra generated by the set $\{a_i\}_{i\in J}$. Thus $VH$ is invariant
 for $\rho$. 

Now since $VH$ is $\rho(a)$ invariant and $\nu(a)=V^*\rho(a)V$ for all $a\in A$ we have that
\[
	V\nu(a)=VV^*\rho(a)V=P_{VH}\rho(a)V=\rho(a)V \text{ for all }a\in A.
\]
Thus $\nu$ is extremal.
\end{proof}

Now it is easy to show that all of the $\pi_\MCV$'s are extremal representations of $\MCA$.

\begin{corollary}
  The representation $\pi_\MCV$ is an extremal representation of $\MCA$.
\end{corollary}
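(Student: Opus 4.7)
The plan is to invoke Proposition \ref{prop:extremal} applied to $\nu=\pi_\MCV$ with the two-element generating set $\{\chi^2,\chi^3\}\subset\MCA$. The proof then reduces to verifying the three hypotheses of that proposition: each generator has norm one in $\MCA$, the two generate a dense subalgebra of $\MCA$, and $\pi_\MCV$ sends each generator to an isometry on $\HT_\MCV$.

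For the norm hypothesis, $\Norm{\chi^k}_\MCA=\sup_{z\in\overline{\MBBD}}|z^k|=1$ for every $k\ge 1$. For the density hypothesis, the unital algebra generated by $\chi^2$ and $\chi^3$ contains every monomial $\chi^n$ with $n\ge 2$, since any such $n$ can be written as $2i+3j$ with $i,j\ge 0$; hence this algebra consists of exactly the polynomials $p$ with $p^\prime(0)=0$. To see that such polynomials are uniformly dense in $\MCA$, I would take $f\in\MCA$, uniformly approximate it by polynomials $p_n\in\AD$, and then replace each $p_n$ by $q_n\coloneqq p_n-p_n^\prime(0)\chi$. Each $q_n$ lies in $\MCA$, and $q_n\to f$ uniformly since $p_n^\prime(0)\to f^\prime(0)=0$ by Cauchy's integral formula.

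For the isometry hypothesis, fix $f\in\HT_\MCV=\HT_\MCN\ominus\MCV$ and $k\in\{2,3\}$. The product $\chi^k f$ lies in $z^2\HT_\MCN$ because $k\ge 2$, and in particular is orthogonal to $\MCN\oplus z\MCN\supseteq\MCV$. Hence $\chi^k f\in\HT_\MCV$, which shows that $\pi_\MCV(\chi^k)$ preserves $\HT_\MCV$. Since multiplication by $\chi$ is an isometry on the ambient vector-valued Hardy space $\HT_\MCN$, so is multiplication by $\chi^k$, and consequently $\pi_\MCV(\chi^k)$ is an isometry.

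With all three hypotheses verified, Proposition \ref{prop:extremal} concludes that $\pi_\MCV$ is extremal. I do not anticipate any serious obstacle in this plan; the only real judgment call is the choice of generators, whose crucial feature is that both have degree at least $2$, so that their products with an element of $\HT_\MCV$ automatically lie in $z^2\HT_\MCN$ and thereby avoid the low-order obstruction subspace $\MCN\oplus z\MCN$ that contains $\MCV$.
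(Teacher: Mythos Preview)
Your proof is correct and follows the paper's approach exactly: both invoke Proposition~\ref{prop:extremal} using $\chi^2$ and $\chi^3$ as generators, with the paper additionally listing the constant $1$ (which you absorb into the phrase ``unital algebra generated''---harmless, since $\pi_\MCV(1)=I$ is trivially a norm-one isometry). Your write-up supplies the density and isometry details that the paper's one-line proof leaves implicit.
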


\begin{proof}
  Since $\pi_{\MCV}(1)$, $\pi_{\MCV}(z^2)$, and $\pi_{\MCV}(z^3)$ are isometries and $1$, $z^2$, and $z^3$ generate $\MCA$, by proposition \ref{prop:extremal} we know that $\pi_{\MCV}$ is extremal.
\end{proof}

\subsection{Proof of Theorem \ref{thm:ExtremalChar}}
\label{subsec:proofExtremalChar}

Let $\nu:\MCA\to B(H)$ be a pure extremal representation of $\MCA$ on some separable Hilbert space $H$.
  By \cite[Corollary 7.7]{P}, the representation $\nu$ has an $C(\MBBT)$-dilation; i.e., there exists a completely contractive representation $\rho:\LINF(\MBBD)\to B(K)$ and an isometry $V:H\to K$ such that $\nu(a) = V^* \rho(a)V$ for all $a\in \MCA$.
   Moreover since $\nu$ is extremal $V\nu(a)=\rho(a)V$ for all $a\in\MCA$ and $VH$ is invariant for $\rho$. 
Finally let
\[
E=\bigvee_{i=0}^{\infty}\rho(z^i)VH\subset K.
\]
 Since $z^i\in\MCA$ for all $i\in\MBBN$ and $i\neq1,$
  \[
    \bigvee_{\substack{i=0\\i\neq1}}^{\infty}\rho(z^i)VH=\bigvee_{\substack{i=0\\i\neq1}}^{\infty}V\nu(z^i)H=VH.
  \]
 In particular,  $E=\rho(z)VH \vee VH$.

First we will show that $S=\rho(z)|_{E}$ is a pure isometry on $E$; if $f,g\in K$, then
\[
  \innerP{\rho(z)f}{\rho(z)g}=\innerP{\rho(z)^*\rho(z)f}{g}=\innerP{\rho(\overline{z}z)f}{g}
  =\innerP{f}{g}.
\]
Since $S$ is the restriction of an isometry to an invariant subspace $S$ is an isometry.
To show that $S$ is pure note that
  \[
    \rho(z^2)E=\rho(z^2)\left(\rho(z)VH\vee VH\right)=\rho(z^3)VH\vee\rho(z^2)VH\subset VH.
  \]
Since $\nu$ is pure we have
\begin{align*}
  \bigcap_{b\in\AD} \rho(b)E &\subset\bigcap_{\substack{b=z^2a\\a\in\MCA}} \rho(a) \rho(z^2)E\\
  &\subset\bigcap_{a\in\MCA}\rho(a)VH\\
  &=\bigcap_{a\in\MCA}V\nu(a)H=\{0\}.
\end{align*}
Thus $S$ is a pure shift on $E$.

 Since $S$ is a pure shift there is a Hilbert space $\MCN$ and a unitary map $W:E \to \HT_{\MCN}$ such that $W S=M_z W$.
 Since  the subspace $S^2 E$ lies in $VH$ and $VH$ is a subspace of $E$, there exists a subspace $\MCV$ of $\MCN \oplus z \MCN$ such that $WVH = \HT_{\MCV} =\HT_\MCN \ominus \MCV$.
 Let $U:H \to \HT_{\MCV}$ be defined by $U=WV$, this is a unitary map such that
 \[
  U^*\pi_{\MCV}(a)Uh=U^*M_aUh=\nu(a)h \text{ for all } h\in H,
 \]
 i.e. $\pi_{\MCV}$ is unitarily equivalent to $\nu$.

\subsection{Proof of Corollary \ref{cor:neilbundleshifts}}
\label{subsec:proofCorExtremal}
  Suppose $\MCV\subset \MBBC \oplus \MBBC z$ and $M$ and $N$ are orthogonal subspaces of $\HT_\MCV$ invariant for $\MCA$.
  Choosing non-zero   $\varphi$ and $\psi$ from $M$ and $N$ respectively, it follows that
  $\langle z^m \varphi, z^n\psi\rangle =0$ for natural numbers $m\ne 1\ne n$.
  Hence if $\mu$ is normalized arclength measure on $\MBBT$ and $\chi(z)=z$,
  \[
    0 =\int_\MBBT \varphi \overline{\psi} \chi^j \dmu
  \]
  for all $j$ and therefore $\varphi \overline{\psi}=0$.
  Since both $\varphi$ and $\psi$ are in $\HT$, each is non-zero almost everywhere whenever it is not the zero function.
  Thus at least one must be zero, which is a contradiction.
  So if $\MCV\subset \MBBC \oplus \MBBC z,$ then $\pi_\MCV$ is rank one.

  By theorem \ref{thm:ExtremalChar} it suffices to check the second part of the corollary for $\pi_\MCV$ where $\MCV\subset \MCN \oplus \MCN z$.
  If $\MCN$ is one dimensional then $\MCN$ is unitarily equivalent to $\MBBC$ and we are done.
  If $\MCN$ is not one dimensional, then choose a pair of non-zero vectors $e$ and $f$ in $\MCN$ such that $\innerP{e}{f}=0$ and let $\MCE = z^2 \HT e$ and $\MCF= z^2 H^2 f$.

  Both $\MCE$ and $\MCF$ are non-trivial subspaces of $\HT_\MCV$ for any $\MCV$ and are $\MCA$ invariant.
  They are also orthogonal by construction. Hence $\pi_\MCV$ is not rank one.

\end{document}